\thanks{\copyright 2025. This work is licensed under a CC BY 4.0 license.}
\begin{document}
\title[\hfilneg EJDE-2025/55\hfil 
Inhomogeneous nonlinear Schr\"odinger equation]
{Non global solutions for non-radial inhomogeneous nonlinear 
Schr\"odinger equations}

\author[R. Bai, T. Saanouni \hfil EJDE-2025/55\hfilneg]
{Ruobing Bai, Tarek Saanouni}

\address{Ruobing Bai \newline
School of Mathematics and Statistics, 
Henan University, Kaifeng 475004, China}
\email{baimaths@hotmail.com}

\address{Tarek Saanouni \newline
Department of Mathematics, 
College of Science, 
Qassim University, Buraydah, Saudi Arabia}
\email{t.saanouni@qu.edu.sa}

\thanks{Submitted March 7, 2025. Published May 26, 2025.}
\subjclass[2020]{35Q55}
\keywords{Inhomogeneous Schr\"odinger problem;
nonlinear equations; bi-harmonic; 
\hfill\break\indent inverse square potential; finite time blow-up}

\begin{abstract}
This work concerns the inhomogeneous Schr\"odinger equation 
$$
 \mathrm{i}\partial_t u-\mathcal{K}_{s,\lambda}u +F(x,u)=0 , \quad 
 u(t,x):\mathbb{R}\times\mathbb{R}^N\to\mathbb{C}.
 $$
Here, $s\in\{1,2\}$, $N>2s$ and $\lambda>-(N-2)^2/4$. 
The linear Schr\"odinger operator is
$\mathcal{K}_{s,\lambda}:= (-\Delta)^s +(2-s)\frac{\lambda}{|x|^2}$,
and the focusing source term can be  local or non-local
$$
F(x,u)\in\{|x|^{-2\tau}|u|^{2(q-1)}u,|x|^{-\tau}|u|^{p-2}
\big(J_\alpha *|\cdot|^{-\tau}|u|^p\big)u\}.
$$
The Riesz potential is $J_\alpha(x)=C_{N,\alpha}|x|^{-(N-\alpha)}$, 
for certain $0<\alpha<N$. The singular decaying term $|x|^{-2\tau}$, 
for some $\tau>0$ gives an inhomogeneous non-linearity. 
One considers the inter-critical regime, namely 
$1+\frac{2(s-\tau)}N<q<1+\frac{2(s-\tau)}{N-2s}$ and 
$1+\frac{2(s-\tau)+\alpha}{N}<p<1+\frac{2(s-\tau)+\alpha}{N-2s}$. 
The purpose is to prove the finite time blow-up of solutions with datum 
in the energy space, not necessarily radial or with finite variance. 
The assumption on the data is expressed in two different ways. 
The first one is in the spirit of the potential well method due to 
Payne-Sattinger. The second one is the ground state threshold standard 
condition. The proof is based on Morawetz estimates and a non-global 
ordinary differential inequality. 
This work complements the recent paper by  Bai and  Li \cite{bl}
in many directions. 
\end{abstract}

\maketitle
\numberwithin{equation}{section}
\newtheorem{theorem}{Theorem}[section]
\newtheorem{lemma}[theorem]{Lemma}
\newtheorem{proposition}[theorem]{Proposition}
\allowdisplaybreaks

\section{Introduction}

This article concerns the Cauchy problem for an inhomogeneous generalized 
Hartree equation 
\begin{equation}
\begin{gathered}
\mathrm{i}\partial_t u-\mathcal{K}_{s,\lambda}u
+ |x|^{-\tau}|u|^{p-2}\big(J_\alpha *|\cdot|^{-\tau}|u|^p\big)u=0 ;\\
u(0,\cdot)=u_0,
\end{gathered} \label{S} 
\end{equation}
and the Cauchy problem for an inhomogeneous Schr\"odinger equation
\begin{equation}
\begin{gathered}
\mathrm{i}\partial_t u-\mathcal{K}_{s,\lambda}u
 + |x|^{-2\tau}|u|^{2(q-1)}u=0 ;\\
u(0,\cdot)=u_0. 
\end{gathered} \label{S'}
\end{equation}

Hereafter, $\frac{N}{2}>s\in\{1, 2\}$ and 
$u=u(t, x): \mathbb{R}\times\mathbb{R}^N \to \mathbb{C}$. The linear Schr\"odinger operator is 
denoted by $\mathcal{K}_{s,\lambda}:= (-\Delta)^s +(2-s)\frac{\lambda}{|x|^2}$. 
We considered 2 cases: 
The first one is 
$\mathcal K_{\lambda}:=\mathcal{K}_{1,\lambda}
= -\Delta +\frac{\lambda}{|x|^2}$, which corresponds to 
Schr\"odinger equation with inverse square potential. 
The second one is $\mathcal{K}_{2,\lambda}:= \Delta^2$, 
which corresponds to fourth-order Schr\"odinger equation. 
The inhomogeneous singular decaying term is 
$|\cdot|^{-2\tau}$ for some $\tau>0$. The Riesz-potential is defined on 
$\mathbb{R}^N$ by 
$$
J_\alpha:=\frac{\Gamma(\frac{N-\alpha}2)}{\Gamma(\frac\alpha2)
\pi^{N/2} 2^\alpha}\,|\cdot|^{\alpha-N},\quad  0<\alpha<N.
$$

In all this expression, one assumes that
\begin{equation}\label{cnd}
\min\{\tau,\alpha,N-\alpha,N-\tau,2-2\tau+\alpha\}>0.
\end{equation}

Motivated by the sharp Hardy inequality \cite{abde}, 
\begin{equation}\label{prt}
\frac{(N-2)^2}4\int_{\mathbb{R}^N}\frac{|f(x)|^2}{|x|^2}\,dx
\leq \int_{\mathbb{R}^N}|\nabla f(x)|^2\,dx,
\end{equation}
one assumes that $\lambda>-(N-2)^2/4$, which guarantees that extension of 
$-\Delta+\frac\lambda{|x|^2}$, denoted by $\mathcal K_{\lambda}$ 
is a positive operator. In the range 
$-\frac{(N-2)^2}4 <\lambda< 1-\frac{(N-2)^2}4$, the extension is not
 unique \cite{ksww,ect}. In such a case,
one picks the Friedrichs extension \cite{ksww,pst}.

Note that by the definition of the operator $\mathcal K_{\lambda}$ and 
Hardy estimate \eqref{prt}, one has
\begin{align}\label{norm}
\|\sqrt{\mathcal K_{\lambda}}\cdot\|=\Big(\|\nabla\cdot\|^2
+\lambda\|\frac\cdot{|x|}\|^2\Big)^{1/2}\simeq \|\cdot\|_{\dot H^1}.
\end{align}

The nonlinear equations of Schr\"odinger type \eqref{S} and \eqref{S'} 
model many physical phenomena. For $s=1$, they are used in nonlinear optical 
systems with spatially dependent interactions \cite{bgvt}. 
In particular, when $\lambda = 0$, they can be thought of as modeling 
inhomogeneities in the medium in which the
wave propagates \cite{kmvbt}. When $\tau = 0$, they model a quantum field 
equations or black hole solutions of the Einstein’s equations \cite{ksww}. 
For $s=2$, the above equations are called fourth-order Schr\"odinger 
equations. The bi-harmonic Schr\"odinger problem was considered first 
in \cite{kar,kars} to take into account
the role of small fourth-order dispersion terms in the propagation of 
intense laser beams in
a bulk medium with a Kerr non-linearity. The source term can be understood 
as a nonlinear potential affected by electron density \cite{lb}. 
 
The literature dealing with \eqref{S} and \eqref{S'} is copious, and 
naturally some references are missing here. Let us start with the 
Schr\"odinger equation with inverse square potential, which corresponds 
to $s=1$. Using the energy method,  \cite{tsuz,tsuz1} investigated the 
local well-posedness in the energy space. Moreover, the local solution 
extends globally in time if either defocusing case or focusing, 
mass-subcritical case. Later on, \cite{cg} revisits the same problem, 
where the authors studied the local well-posedness and small data global 
well-posedness in the energy-sub-critical case by using the standard
Strichartz estimates combined with the fixed point argument. 
See also \cite{ajk} for the ground state threshold of global existence 
versus blow-up dichotomy in the inter-critical regime. 
Furthermore, \cite{cg} showed a scattering criterion and constructed a 
wave operator for the inter-critical case. The well-posedness and blow-up 
in the energy critical regime were investigated in \cite{jak}. 
The inhomogeneous generalized Hartree equation was treated first by the 
author \cite{mt}, where the ground state threshold dichotomy was investigated 
using a sharp adapted Gagliargo-Nirenberg type estimate. After that, the 
second author treated the intermediate case in the sense that \eqref{S} is 
locally well-posed in $\dot H^1\cap\dot H^{s_c}$, $0 < s_c < 1$, but
this does not imply the inter-critical case $H^{s_c}$. 
The scattering under the ground state threshold with spherically symmetric 
data, was proved by the second author \cite{sx}. The scattering was extended 
to the non-radial regime in \cite{cx}. The well-posedness in the 
energy-critical regime was investigated recently \cite{kls,sk}. 
To this end, the authors approach to the matter based on the Sobolev-Lorentz 
space which can lead to perform a finer analysis. 
This is because it makes it possible to control the non-linearity involving 
the singularity $|x|^{-\tau}$ as well as the Riesz potential more effectively.
Now, one deals with the bi-harmonic case, namely $s=2$. For a local source 
term, in \cite{gp}, the local well-posedness was obtained in the energy 
sub-critical regime. This result was improved in \cite{ajk1}. 
The scattering was investigated in \cite{gp1,cg1,vdd3}. For a non-local 
source term, the local existence of energy solutions and the scattering were 
proved by the second author in \cite{st4,sg}. See also \cite{sg2,sp2} 
for the energy-critical regime.

The finite time concentration of energy solutions to non-linear 
Schr\"odinger equations has a long history. Indeed, in the 
mass-super-critical focusing regime, it is known that an energy data with 
finite variance or which is radial gives a blowing up solution for negative 
energy \cite{glas,ot1}. A similar result for non-radial data and with 
infinite variance is open except for $N=1$, see \cite{ot2}. 
The results of blow-up in some other situations can be referred to 
\cite{DWZ, HR, YM, FM} and references therein.
Recently, some works try to remove the radial or finite variance data 
assumption in the inhomogeneous case. Indeed, the second author proved 
in \cite{bl} the finite time blow-up of energy solutions under the ground 
state threshold in a restricted range of the source term exponent. 
In the mass-critical regime, the blow-up of energy solutions with negative 
energy was obtained recently \cite{cf}.

The blow-up of energy solutions to bi-harmonic Schr\"odinger equations was 
open for a long time because of the lack of a variance identity.
 Many authors investigated the blow-up of radial solutions, since the 
pioneering work \cite{bhl} using a localized virial identity for radial datum.
See, for instance \cite{cow,sg}. Recently the blow-up for arbitrary datum
 with negative energy, in the energy space, was obtained in \cite{vdd2} 
for a perturbed bi-harmonic NLS. This result don't extend to \eqref{S'} 
for $s=2$.

The purpose of this article is to investigate the finite time blow-up of 
energy solutions to the Schr\"odinger problems \eqref{S} and \eqref{S'}. 
The novelty is to prove the non-global existence of solutions with arbitrary 
negative energy datum. Precisely, one don't require any radial or finite 
variance assumption for the datum. In the case $s=1$, this work complements 
the paper of the second author \cite{bl} for $\lambda\neq0$ and for a 
non-local source term. Moreover, one considers a weaker assumption on the 
datum. In the case $s=2$, this work complements the paper \cite{vdd2} 
to the inter-critical regime, and for a non-local source term. 
Furthermore, this work gives a natural complement of the paper \cite{st4},
where the first author deals with the scattering of the bi-harmonic
Schr\"odinger equation in the inter-critical focusing regime under the 
ground state threshold.

The rest of this article is organized as follows. 
The next section contains the main results and some useful estimates. 
Sections 3 and 4 contain the proofs of the main results. 

\section{Background and main results}

This section contains the main results and some useful estimates.

\subsection{Preliminaries}
Here and hereafter, one denotes for simplicity some standard Lebesgue 
and Sobolev spaces and norms as follows
\[
L^r:=L^r(\mathbb{R}^N),\quad W^{s,r}:=W^{s,r}(\mathbb{R}^N),\quad 
H^s:=W^{s,2}, \quad 
\|\cdot\|_r:=\|\cdot\|_{L^r},\quad
\|\cdot\|:=\|\cdot\|_2.
\]
Let us also define the real numbers
\[
B:=\frac{Np-N-\alpha+2\tau}{s},\quad A:=2p-B, \quad
B':=\frac{Nq-N+2\tau}{s},\quad A':=2q-B'.
\]
If $u\in {H^s}$, one defines the quantities related to energy solutions 
of \eqref{S} and \eqref{S'},
\begin{gather*}
\mathcal P[u]:=\int_{\mathbb{R}^N} |x|^{-\tau}\big(J_\alpha 
 *|\cdot|^{-\tau}|u|^p\big)|u|^p\,dx,\quad 
 \mathcal Q[u]:=\int_{\mathbb{R}^N} |x|^{-2\tau}|u|^{2q}\,dx,\\
\mathcal I[u]:=\|\sqrt{\mathcal K_{s,\lambda}} u\|^2
 -\frac{B}{2p}\mathcal P[u],\quad
  \mathcal J[u]:=\|\sqrt{\mathcal K_{s,\lambda}}
   u\|^2-\frac{B'}{2q}\mathcal Q[u];\\
\mathcal M[u]:= \int_{\mathbb{R}^N}|u(x)|^2\,dx ,\quad 
\mathcal E[{u}]:=\|\sqrt{\mathcal K_{s,\lambda}} u\|^2
-\frac1p\mathcal P[u],\\
 \mathcal E'[{u}]:=\|\sqrt{\mathcal K_{s,\lambda}} u\|^2-\frac1q\mathcal Q[u].
\end{gather*}
We denote also the so-called actions
\begin{gather}
   \mathcal S[u]:=\mathcal E[{u}]+\mathcal M[{u}],\label{su}\\
 \mathcal S'[u]:=\mathcal E'[{u}]+\mathcal M[{u}].\label{s'u}
\end{gather}
Take also the real numbers
\begin{gather}
m:=\inf_{0\neq u\in H^s}\big\{\mathcal S[u]: 
 \mathcal{I}[u]=0\big\};\label{m}\\
m':=\inf_{0\neq u\in H^s}\big\{\mathcal S'[u]: 
 \mathcal{J}[u]=0\big\}.\label{m'}
\end{gather}
Finally, we define the sets, which are non-empty with a scaling argument
\begin{gather}
\mathcal{A}^-:=\big\{ u\in H^s: 
 \mathcal{S}[u]<m: \mathcal{I}[u]<0\big\}\label{a-},\\
\mathcal{A'}^-:=\big\{ u\in H^s:
 \mathcal{S'}[u]<m',\quad \mathcal{J}[u]<0\big\}\label{a'-}.
\end{gather}
Then equation \eqref{S} has the scaling invariance
\begin{equation}\label{scal}
{u}_\kappa:=\kappa^\frac{2s-2\tau+\alpha}{2(p-1)}u(\kappa^{2s}
\cdot,\kappa\cdot),\quad\kappa>0.
\end{equation}
The critical exponent $s_c$ keeps invariant the homogeneous
Sobolev norm
$$
\|{u}_\kappa(t)\|_{\dot H^\mu}=\kappa^{\mu-(\frac N2
-\frac{2s-2\tau+\alpha}{2(p-1)})}\|{u}(\kappa^{2s} t)\|_{\dot H^\mu}
:=\kappa^{\mu-s_c}\|{u}(\kappa^{2s} t)\|_{\dot H^\mu}.
$$
Two cases are of particular interest in the physical context. 
The first one $s_c=0$ corresponds to the mass-critical case which is 
equivalent to $p=p_c:=1+\frac{2s-2\tau+\alpha}N$. 
This case is related to the conservation of the mass $\mathcal{M}$ 
given above. The second one is the energy-critical case $s_c=s$, 
which corresponds to $p=p^c:=1+\frac{2s-2\tau+\alpha}{N-2s}$. 
This case is related to the conservation of the energy $\mathcal E$ 
defined above. A particular periodic global solution of \eqref{S} 
takes the form $e^{\mathrm{i}t}\varphi$, where $\varphi$ satisfies
\begin{equation}\label{E}
\mathcal K_{s,\lambda}\varphi+\varphi=|x|^{-\tau}|\varphi|^{p-2}
\big(J_\alpha *|\cdot|^{-\tau}|\varphi|^p\big)\varphi,\quad 
0\neq \varphi\in {H^s}.
\end{equation}
The equation \eqref{S'} has the scaling invariance
\begin{equation}\label{scal'}
{u}_\kappa:=\kappa^\frac{s-\tau}{q-1}u(\kappa^{2s}\cdot,
\kappa\cdot),\quad\kappa>0.
\end{equation}
The critical exponent $s_c'$ keeps invariant the following homogeneous 
Sobolev norm
$$
\|{u}_\kappa(t)\|_{\dot H^\mu}
=\kappa^{\mu-(\frac N2-\frac{s-\tau}{q-1})}\|{u}(\kappa^{2s} t)
\|_{\dot H^\mu}:=\kappa^{\mu-s_c'}\|{u}(\kappa^{2s} t)\|_{\dot H^\mu}.
$$
Two cases are of particular interest in the physical context. 
The first one $s_c'=0$ corresponds to the mass-critical case which is 
equivalent to $q=q_c:=1+\frac{2s-2\tau}N$. 
This case is related to the conservation of the mass. The second one is 
the energy-critical case $s_c'=s$, which corresponds to 
$q=q^c:=1+\frac{2s-2\tau}{N-2s}$. This case is related to the 
conservation of the energy $ \mathcal E'$ defined above. A particular 
periodic global solution of \eqref{S'} takes the form $e^{\mathrm{i}t}\psi$, 
where $\psi$ satisfies
\begin{equation}\label{E'}
\mathcal K_{s,\lambda}\psi+\psi=|x|^{-2\tau}|\psi|^{2(q-1)}\psi,
\quad 0\neq \psi\in {H^s}.
\end{equation}

The existence of such a ground state is related to the next 
Gagliardo-Nirenberg type inequalities \cite{sg,stsubm}.

\begin{proposition}\label{gag}
Let $s\in\{1,2\}$, $N>2s$, $0<\alpha<N$ and $1+\frac\alpha N< p< p^c$. 
If $s=1$, one assumes that $\lambda>-\frac{(N-2)^2}{4}$ and \eqref{cnd} holds.
Moreover, if $s=2$, one assumes that 
$0<2\tau<\min\{N+\alpha,4(1+\frac\alpha N)\}$. Thus, 
\begin{enumerate}
\item There exists
a sharp constant $C_{N,p,\tau,\alpha,\lambda}>0$ such that for all 
$u\in H^s$,
\begin{equation}\label{ineq}
\int_{\mathbb{R}^N}|x|^{-\tau}|u|^p\big(J_\alpha*|\cdot|^{-\tau}|u|^p\big)\,dx
\leq C_{N,p,\tau,\alpha,\lambda}\|u\|^A\|\sqrt{\mathcal
 K_{s,\lambda}} u\|^B;
\end{equation}

\item there exists $\varphi$ a solution to \eqref{E} satisfying
\begin{equation}\label{part3}
C_{N,p,\tau,\alpha,\lambda}=\frac{2p}{A}\big(\frac AB\big)^{B/2}
\frac1{\|\varphi\|^{2(p-1)}};
\end{equation}

\item one has the following Pohozaev identities
\begin{equation}\label{poh}
\mathcal P[\varphi]=\frac{2p}{A}\mathcal M[\varphi]
=\frac{2p}{B}\|\sqrt{\mathcal K_{s,\lambda}}\varphi\|^2.
\end{equation}
\end{enumerate}
\end{proposition}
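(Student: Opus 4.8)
The plan is to realize the sharp constant in \eqref{ineq} as the reciprocal of the minimum of a Weinstein-type quotient, to show that this minimum is attained at a ground state solving \eqref{E}, and then to read off both the constant \eqref{part3} and the Pohozaev identities \eqref{poh} from the scaling structure that forces the exponents $A$ and $B$. Write
\[
W[u]:=\frac{\|u\|^A\,\|\sqrt{\mathcal K_{s,\lambda}}u\|^B}{\mathcal P[u]},\qquad 0\neq u\in H^s,
\]
so that $C_{N,p,\tau,\alpha,\lambda}=(\inf_{u\neq0}W[u])^{-1}$. The first observation is that $W$ is invariant under the two-parameter group $u\mapsto \theta\,u(\kappa\,\cdot)$: using $\|\sqrt{\mathcal K_{s,\lambda}}u(\kappa\cdot)\|^2=\kappa^{2s-N}\|\sqrt{\mathcal K_{s,\lambda}}u\|^2$, $\|u(\kappa\cdot)\|^2=\kappa^{-N}\|u\|^2$ and $\mathcal P[u(\kappa\cdot)]=\kappa^{2\tau-\alpha-N}\mathcal P[u]$, the homogeneity in $\kappa$ cancels precisely because $sB=Np-N-\alpha+2\tau$ and $A+B=2p$. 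This invariance is the source of the loss of compactness and governs the whole argument.

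For part (1) I would first prove $\inf W>0$, which is the inequality \eqref{ineq}. Applying the Hardy--Littlewood--Sobolev inequality to the Riesz convolution gives $\mathcal P[u]\lesssim \big\||x|^{-\tau}|u|^p\big\|_{L^{2N/(N+\alpha)}}^2$, and then a weighted Hölder/Caffarelli--Kohn--Nirenberg estimate together with the Sobolev embedding $\dot H^s\hookrightarrow L^{2N/(N-2s)}$ and the norm equivalence \eqref{norm} bounds the right-hand side by a product of powers of $\|u\|$ and $\|\sqrt{\mathcal K_{s,\lambda}}u\|$. The condition \eqref{cnd}, the range $1+\frac\alpha N<p<p^c$, and (for $s=2$) the restriction on $\tau$ are exactly what make these embeddings admissible; the two Sobolev exponents produced are then forced to equal $A$ and $B$ by the scale invariance above, since no other pair is dimensionally consistent.

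The analytic heart is the attainment of $\inf W$ in part (2), which I expect to be the main obstacle. Take a minimizing sequence and, using the two-parameter invariance, normalize it so that $\|u_n\|=\|\sqrt{\mathcal K_{s,\lambda}}u_n\|=1$; then $(u_n)$ is bounded in $H^s$ and $\mathcal P[u_n]\to(\inf W)^{-1}>0$. Passing to a weak limit $\varphi$, the decisive step is to upgrade weak to strong convergence in $\mathcal P$. Here the crucial structural feature is that, unlike the homogeneous NLS, both the inhomogeneity $|x|^{-\tau}$ and the inverse-square potential break translation invariance, so the only defect of compactness is the scaling already fixed by the normalization; a concentration-compactness/profile decomposition argument adapted to $\mathcal K_{s,\lambda}$, as in \cite{sg,stsubm}, rules out vanishing and dichotomy and yields $\mathcal P[u_n]\to\mathcal P[\varphi]$, so $\varphi\neq0$ is a minimizer. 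Its Euler--Lagrange equation, after rescaling in amplitude and space to normalize the coefficients to $1$, is precisely \eqref{E}.

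Finally, parts (2)--(3) are closed by two scaling identities for the critical point $\varphi$ of the action $\mathcal S$. Pairing \eqref{E} with $\varphi$ (equivalently $\frac{d}{d\theta}\mathcal S[\theta\varphi]|_{\theta=1}=0$) gives the Nehari relation
\[
\|\sqrt{\mathcal K_{s,\lambda}}\varphi\|^2+\mathcal M[\varphi]=\mathcal P[\varphi],
\]
while differentiating the dilation $\mathcal S[\varphi(\kappa\cdot)]$ at $\kappa=1$ and using the homogeneities above gives
\[
(2s-N)\|\sqrt{\mathcal K_{s,\lambda}}\varphi\|^2-N\,\mathcal M[\varphi]=\frac{2\tau-\alpha-N}{p}\,\mathcal P[\varphi].
\]
Substituting the first relation into the second, the coefficients collapse to $sA$ and $sB$, yielding $A\,\|\sqrt{\mathcal K_{s,\lambda}}\varphi\|^2=B\,\mathcal M[\varphi]$; together with the Nehari relation and $A+B=2p$ this gives \eqref{poh}. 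Inserting \eqref{poh} into $C_{N,p,\tau,\alpha,\lambda}=\mathcal P[\varphi]\,\|\varphi\|^{-A}\|\sqrt{\mathcal K_{s,\lambda}}\varphi\|^{-B}$ and using $(A+B)/2=p$ produces the closed form \eqref{part3}.
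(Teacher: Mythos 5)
The paper itself does not prove Proposition \ref{gag}: it is imported from \cite{sg,stsubm}, where it is established by exactly the Weinstein-type variational scheme you propose (minimize the scale-invariant quotient, recover compactness from the translation invariance broken by $|x|^{-\tau}$ and the potential, rescale the minimizer to a solution of \eqref{E}, then read off the Pohozaev identities and the sharp constant). Your scaling algebra is right --- $sB=Np-N-\alpha+2\tau$ and $A+B=2p$ do make the quotient invariant, the Nehari and dilation identities do collapse to $A\|\sqrt{\mathcal K_{s,\lambda}}\varphi\|^2=B\,\mathcal M[\varphi]$, and these yield \eqref{poh} and \eqref{part3} exactly as you compute --- so your proposal matches the source proof in both structure and detail.
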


The next Gagliardo-Nirenberg type inequality \cite{sg,cg} is essential 
to estimate an eventual solution to the problem \eqref{S'}.

\begin{proposition}\label{gag2}
Let $s\in\{1,2\}$, $N>2s$, $\lambda>-\frac{(N-2)^2}{4}$, $0<\tau<s$ and 
$1< q< q^c$. Thus, 
\begin{enumerate}
\item
there exists a sharp constant $C_{N,q,\tau,\lambda}>0$, such that for all 
$v\in H^s$,
$$
\int_{\mathbb{R}^N}|x|^{-2\tau}|v|^{2q}\,dx\leq C_{N,q,\tau,\lambda}\|v\|^{A'}
\|\sqrt{\mathcal K_{s,\lambda}} v\|^{B'}
$$

\item
 there exists $\psi$ a solution to \eqref{E'} satisfying
\begin{equation}\label{part32}
C(N,q,\tau)=\frac{2q}{A'}(\frac{A'}{B'})^{\frac{B'}2}\|\psi\|^{-2(q-1)},
\end{equation}
where $\psi$ is a solution to \eqref{E'};

\item one has the following Pohozaev identities
\begin{equation}\label{poh'}
\mathcal Q[\psi]=\frac{2q}{A'}\mathcal M[\psi]
=\frac{2q}{B'}\|\sqrt{\mathcal K_{s,\lambda}} \psi\|^2.
\end{equation}
\end{enumerate}
\end{proposition}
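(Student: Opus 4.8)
The plan is to realize $C_{N,q,\tau,\lambda}$ as the best constant in a weighted Gagliardo--Nirenberg (Caffarelli--Kohn--Nirenberg) inequality and to produce $\psi$ as an extremal. Introduce the Weinstein-type quotient
\[
W(v):=\frac{\mathcal Q[v]}{\|v\|^{A'}\,\|\sqrt{\mathcal K_{s,\lambda}}\,v\|^{B'}},\qquad 0\neq v\in H^s,
\]
and set $C_{N,q,\tau,\lambda}:=\sup_{v}W(v)$. A direct computation using the two-parameter dilation $v\mapsto a\,v(b\,\cdot)$ shows, precisely because of the definitions of $A'$ and $B'$, that the $a$-homogeneity matches via $A'+B'=2q$ and the $b$-homogeneity via $sB'=Nq-N+2\tau$, so that $W$ is invariant under this scaling. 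This structural invariance is what drives every subsequent step.

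First I would establish that the supremum is finite, which is exactly assertion (1). For $s=1$ one uses the norm equivalence \eqref{norm} to replace $\|\sqrt{\mathcal K_\lambda}\,v\|$ by $\|v\|_{\dot H^1}$ and invokes the classical Caffarelli--Kohn--Nirenberg inequality for the weight $|x|^{-2\tau}$; the hypotheses $0<\tau<s$ and $1<q<q^c$ pin down exactly the admissible range of the scaling exponents. For $s=2$ one argues analogously from the higher-order Hardy--Rellich--Sobolev inequality, using $\|\sqrt{\mathcal K_{2,\lambda}}\,v\|=\|\Delta v\|$. This yields the inequality with some finite constant $C_{N,q,\tau,\lambda}$.

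The heart of the matter, and the step I expect to be the main obstacle, is the attainment of the supremum. I would take a maximizing sequence $(v_n)$ and use the scale invariance of $W$ to normalize $\|v_n\|=\|\sqrt{\mathcal K_{s,\lambda}}\,v_n\|=1$, so that $\mathcal Q[v_n]\to C_{N,q,\tau,\lambda}$ and $(v_n)$ is bounded in $H^s$. The decisive feature is that the singular weight $|x|^{-2\tau}$ breaks translation invariance: a translate $v(\cdot-y_n)$ with $|y_n|\to\infty$ leaves the two normalizing norms unchanged while sending $\mathcal Q$ to zero, so mass cannot escape to infinity without lowering $W$. A concentration--compactness analysis then excludes both vanishing (ruled out by $\mathcal Q[v_n]\to C>0$) and dichotomy/escape (penalized by the weight), and a weakly convergent subsequence produces a nonzero maximizer $\psi_0$. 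This confinement mechanism works uniformly for $s\in\{1,2\}$, which is important since for the bi-Laplacian no Pólya--Szegő symmetrization is available; this is where the genuine analytic difficulty lies, and where the cited references \cite{sg,cg} supply the work.

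Writing the Euler--Lagrange equation for $\psi_0$ gives
\[
\frac{B'}{\|\sqrt{\mathcal K_{s,\lambda}}\,\psi_0\|^2}\,\mathcal K_{s,\lambda}\psi_0+\frac{A'}{\|\psi_0\|^2}\,\psi_0=\frac{2q}{\mathcal Q[\psi_0]}\,|x|^{-2\tau}|\psi_0|^{2(q-1)}\psi_0 ,
\]
and a dilation $\psi:=a\,\psi_0(b\,\cdot)$ with $a,b>0$ chosen to normalize the three coefficients to $1$ (solvable since $b^{2s}=B'/A'$ fixes $b$ and the nonlinearity then fixes $a$) yields a solution $\psi$ of \eqref{E'}; this proves the existence in assertion (2). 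Finally, testing \eqref{E'} against $\psi$ gives $\|\sqrt{\mathcal K_{s,\lambda}}\,\psi\|^2+\mathcal M[\psi]=\mathcal Q[\psi]$, while the Pohozaev relation obtained from $\frac{d}{d\mu}\big|_{\mu=1}$ of the functional $\tfrac12\|\sqrt{\mathcal K_{s,\lambda}}\,v\|^2+\tfrac12\mathcal M[v]-\tfrac1{2q}\mathcal Q[v]$ evaluated along $v=\psi(\mu\,\cdot)$ gives a second linear relation among $\|\sqrt{\mathcal K_{s,\lambda}}\,\psi\|^2$, $\mathcal M[\psi]$ and $\mathcal Q[\psi]$. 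Solving this $2\times 2$ system and using $sA'=(2s-N)q+N-2\tau$ and $sB'=Nq-N+2\tau$ delivers the identities \eqref{poh'}; substituting $\mathcal M[\psi]=\tfrac{A'}{2q}\mathcal Q[\psi]$ and $\|\sqrt{\mathcal K_{s,\lambda}}\,\psi\|^2=\tfrac{B'}{2q}\mathcal Q[\psi]$ back into $W(\psi)=C_{N,q,\tau,\lambda}$ and invoking $A'+B'=2q$ produces the closed form \eqref{part32}, completing assertions (2) and (3).
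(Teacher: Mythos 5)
The paper never proves Proposition \ref{gag2}: it imports the statement wholesale from \cite{cg} (the inverse-square case $s=1$) and \cite{sg} (the bi-harmonic case $s=2$), so there is no internal proof to compare against. Your proposal is a correct reconstruction of the standard Weinstein-type variational argument, which is essentially what those cited references carry out. Your bookkeeping is right: the quotient $W$ is invariant under $v\mapsto a\,v(b\,\cdot)$ precisely because $A'+B'=2q$ and $sB'=Nq-N+2\tau$; the Euler--Lagrange equation you write is the correct one; the dilation $\psi=a\,\psi_0(b\,\cdot)$ with $b^{2s}=B'/A'$ (legitimate under your normalization $\|\psi_0\|=\|\sqrt{\mathcal K_{s,\lambda}}\,\psi_0\|=1$, which a maximizer indeed inherits from the maximizing sequence) converts the multiplier equation into \eqref{E'}; and combining the tested equation $\|\sqrt{\mathcal K_{s,\lambda}}\,\psi\|^2+\mathcal M[\psi]=\mathcal Q[\psi]$ with the dilation identity $\frac{2s-N}{2}\|\sqrt{\mathcal K_{s,\lambda}}\,\psi\|^2-\frac N2\mathcal M[\psi]=\frac{2\tau-N}{2q}\mathcal Q[\psi]$ yields exactly \eqref{poh'}, after which $W(\psi)=C_{N,q,\tau,\lambda}$ and $\frac{A'}2+\frac{B'}2=q$ give \eqref{part32}. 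The one step that remains a sketch rather than a proof is the attainment of the supremum, and you correctly identify it as the crux: for $s=2$ no P\'olya--Szeg\H{o} symmetrization is available, and compactness must come from the decay of the weight $|x|^{-2\tau}$ (weak sequential continuity of $\mathcal Q$ on bounded sets of $H^s$ in the strictly subcritical range $1<q<q^c$, $0<\tau<s$); this is precisely the content supplied by \cite{sg,cg}. One last pedantic point: the variational scheme produces \emph{a} nontrivial solution of \eqref{E'}, not a priori a ground state, but since the proposition only asserts existence of a solution satisfying \eqref{part32} and \eqref{poh'}, that is all you need.
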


In the inter-critical regime $0<s_c<{s}$, one denotes the positive real 
number $\frac{s}{s_c}-1:=\alpha_c\in(0,1)$, $\varphi$ be a ground state 
of \eqref{E} and the scale invariant quantities
\[
\mathcal{ME}[u_0]:=\Big(\frac{\mathcal M[u_0]}{\mathcal M[\varphi]}
 \Big)^{\alpha_c}\Big(\frac{\mathcal E[u_0]}{ \mathcal E[\varphi]}\Big),\quad
\mathcal{MG}[u_0]:=\Big(\frac{\|u_0\|}{\|\varphi\|}\Big)^{\alpha_c}
\Big(\frac{\|\sqrt{\mathcal K_{s,\lambda}} u_0\|}
{\|\sqrt{\mathcal K_{s,\lambda}} \varphi\|}\Big).
\]
Similarly, in the inter-critical regime $0<s_c'<{s}$, one denotes 
the positive real number $\frac{s}{s_c'}-1:=\alpha_c'\in(0,1)$, $\psi$ 
be a ground state of \eqref{E'} and the scale invariant quantities
\[
\mathcal{ME'}[u_0]:=\Big(\frac{\mathcal M[u_0]}{\mathcal M[\psi]}
 \Big)^{\alpha_c'}\Big(\frac{\mathcal E'[u_0]}{\mathcal E'[\psi]}\Big),\quad
\mathcal{MG'}[u_0]:=\Big(\frac{\|u_0\|}{\|\psi\|}\Big)^{\alpha_c'}
\Big(\frac{\|\sqrt{\mathcal K_{s,\lambda}} u_0\|}
{\|\sqrt{\mathcal K_{s,\lambda}} \psi\|}\Big).
\]
In the next sub-section, one lists the contributions of this note.

\subsection{Main results}

First, one deals with the non-global existence of energy solutions to the 
generalized Hartree problem \eqref{S}.

\begin{theorem}\label{t1}
Let $s\in\{1,2\}$, $N>2s$, $0<\alpha<N$, $\lambda\geq0$ and 
$0<\tau<{s\frac{\alpha+N}N}$ {such that \eqref{cnd} holds}. 
Suppose that $\max\{2,p_c\}< p <p^c$ and $p\leq 1+\frac{2s+\alpha{-\tau}}N$. 
Take $\varphi$ be a ground state solution to \eqref{E} and 
$u\in C_{T^*}(H^s)$ be a maximal solution of the focusing problem \eqref{S}. 
Thus, $u$ blows-up in finite time if one of the following assumptions holds
\begin{gather} \label{ss}
u_0\in \mathcal A^-, \\
\label{t13}
\mathcal{MG}[u_0]>1>\mathcal{ME}[u_0].
\end{gather}
\end{theorem}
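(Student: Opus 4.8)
The plan is to prove the blow-up under the Payne--Sattinger hypothesis \eqref{ss} and to reduce the threshold condition \eqref{t13} to it. For the reduction I would first record that, by Proposition~\ref{gag}, the sharp constant in \eqref{ineq} is attained at the ground state $\varphi$, so the infimum $m$ in \eqref{m} equals $\mathcal S[\varphi]$, and the Pohozaev identities \eqref{poh} let me express $\mathcal E[\varphi]$, $\mathcal M[\varphi]$ and $\|\sqrt{\mathcal K_{s,\lambda}}\varphi\|^2$ through one another. By the standard variational correspondence, \eqref{t13} then places $u_0$ in $\mathcal A^-$: the condition $\mathcal{MG}[u_0]>1$ together with the sharp inequality \eqref{ineq} forces $\mathcal I[u_0]<0$, while $\mathcal{ME}[u_0]<1$, after the scaling \eqref{scal} and the identities \eqref{poh}, forces $\mathcal S[u_0]<m$. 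Thus it suffices to treat \eqref{ss}.

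The second step is the invariance of $\mathcal A^-$ under the flow. Conservation of mass and energy gives $\mathcal S[u(t)]=\mathcal S[u_0]<m$ on $[0,T^*)$. Were $\mathcal I[u(t_0)]=0$ for some $t_0$, then $u(t_0)$ would be admissible in \eqref{m} and force $\mathcal S[u(t_0)]\ge m$, a contradiction; since $t\mapsto\mathcal I[u(t)]$ is continuous and $\mathcal I[u_0]<0$, one gets $\mathcal I[u(t)]<0$ throughout. I would then upgrade this to the quantitative coercivity $\mathcal I[u(t)]\le -c\,\|\sqrt{\mathcal K_{s,\lambda}}u(t)\|^2$ with $c>0$ depending only on $m-\mathcal S[u_0]$. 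This is the standard spectral-gap argument: rescaling through \eqref{scal} and using $B>2$ (which holds since $p>p_c$), the constraint $\mathcal S<m$ pushes $\frac{B}{2p}\mathcal P[u]$ above $\|\sqrt{\mathcal K_{s,\lambda}}u\|^2$ by a fixed proportion. The same lower bound on $\mathcal P[u]$, combined with \eqref{ineq}, also yields a uniform lower bound $\|\sqrt{\mathcal K_{s,\lambda}}u(t)\|^2\ge c_0>0$.

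The core is a localized virial (Morawetz) estimate, needed because the datum is neither radial nor of finite variance. I would fix a radial cutoff $\phi_R(x)=R^2\phi(|x|/R)$ with $\phi(r)=r^2$ for $r\le1$, $\phi''\le2$ and $\phi'(r)/r\le2$, and set $V_R(t)=\int_{\mathbb R^N}\phi_R|u|^2\,dx\ge0$. Differentiating twice along \eqref{S} gives $V_R''(t)=c_s\,\mathcal I[u(t)]+\mathcal R_R[u(t)]$, where the remainder $\mathcal R_R$ is supported in $\{|x|>R\}$ and gathers the truncation defects of the kinetic and nonlinear parts. The kinetic and potential defect is favorable for $s=1$ once $\lambda\ge0$ (the bounds $\phi''\le2$, $\phi'/r\le2$ only lower the positive term), up to an $O(R^{-2})\mathcal M[u_0]$ contribution from the bi-Laplacian of $\phi_R$; the nonlinear defect is where the inhomogeneity pays off, the factor $|x|^{-\tau}\le R^{-\tau}$ on $\{|x|>R\}$ producing, after \eqref{ineq}, a bound $\mathcal R_R\le\eta(R)\,\mathcal P[u]+C R^{-2}\mathcal M[u_0]$ with $\eta(R)\to0$. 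Feeding this into $V_R''=c_s\mathcal I+\mathcal R_R$ and eliminating $\mathcal P[u]=\frac{2p}{B}(\|\sqrt{\mathcal K_{s,\lambda}}u\|^2-\mathcal I[u])$, the coercivity absorbs the $\eta(R)\mathcal P$ term and leaves, for $R$ large, $V_R''(t)\le -\frac{c_s c}{2}\,\|\sqrt{\mathcal K_{s,\lambda}}u(t)\|^2+CR^{-2}\mathcal M[u_0]\le -\frac{c_s c}{4}c_0<0$.

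Finally, the differential inequality is non-global: integrating $V_R''\le -c''<0$ twice gives $V_R(t)\le V_R(0)+V_R'(0)\,t-\frac{c''}{2}t^2$, which cannot remain nonnegative for all $t$, so $T^*<\infty$ and $u$ blows up. The main obstacle I anticipate is precisely the remainder bound $\mathcal R_R\le\eta(R)\mathcal P[u]+CR^{-2}\mathcal M[u_0]$ with $\eta(R)\to0$ uniformly in time: since the inter-critical exponent satisfies $B>2$, the naive estimate of the nonlinear defect scales like $R^{-\tau}\|\sqrt{\mathcal K_{s,\lambda}}u\|^{B}$ and competes with the a priori unbounded size of $\|\sqrt{\mathcal K_{s,\lambda}}u(t)\|$, so the defect must instead be shown to be a vanishing fraction of the \emph{full} functional $\mathcal P[u]$. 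Making this work---splitting the Riesz interaction so that the $R^{-\tau}$ gain sits on $\mathcal P[u]$ rather than on a supercritical seminorm, and, for $s=2$, additionally dominating the fourth-order kinetic defect $\int_{|x|>R}|\Delta u|^2$ for which no inverse-square potential is available---is where the restrictions $p\le1+\frac{2s+\alpha-\tau}N$ and $\lambda\ge0$ enter.
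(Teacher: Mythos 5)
Your overall skeleton (coercivity from a variational lemma, a localized Morawetz estimate whose exterior error is beaten down by the factor $|x|^{-\tau}\le R^{-\tau}$, then a non-global differential inequality) matches the paper, but two of your steps contain genuine gaps. First, the claimed reduction of \eqref{t13} to \eqref{ss} is false. The condition \eqref{t13} is invariant under the scaling \eqref{scal}, whereas $\mathcal S[u_0]<m$ is not: if $\mathcal E[u_0]>0$, then along $u_\kappa$ one has $\mathcal S[u_\kappa]=\kappa^{2(s-s_c)}\mathcal E[u_0]+\kappa^{-2s_c}\mathcal M[u_0]\to\infty$ as $\kappa\to\infty$, while $\mathcal{MG}$ and $\mathcal{ME}$ are unchanged; so \eqref{t13} admits data with arbitrarily large action, which cannot lie in $\mathcal A^-$. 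This is exactly why the paper does \emph{not} reduce one hypothesis to the other: under \eqref{t13} it proves a separate coercivity lemma (Lemma \ref{ky1}, and Lemma \ref{ky1'} for $s=2$), obtaining $\mathcal I[u(t)]<-C<0$ and $\mathcal I[u(t)]+\varepsilon\|\sqrt{\mathcal K_{s,\lambda}}u(t)\|^2<0$ directly from the sharp Gagliardo--Nirenberg inequality, the Pohozaev identities \eqref{poh}, and a continuity argument; only the Morawetz part of the proof is shared between the two cases.

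Second, your concluding concavity argument ($V_R''\le -c''<0$ with $V_R\ge 0$, integrate twice) is only available for $s=1$: the identity $V_\xi''=M_\xi'$ is stated in Proposition \ref{mrwz1} for the inverse-square case, but it fails for the bi-harmonic equation, where $\partial_t|u|^2$ is not the divergence of $-2\Im(\bar u\nabla u)$ and Proposition \ref{mrwz} accordingly gives only $M_\xi'$, with no nonnegative potential $V_\xi$ sitting above it. For $s=2$ one must argue on the Morawetz action itself, which has no sign; the paper's device is to bound $|M_R|\lesssim R\|\nabla u\|\|u\|\lesssim R\|\Delta u\|^{1/2}$ by interpolation and mass conservation, so that $M_R'\lesssim-\|\Delta u\|^2$ yields, for $f(t):=\int_T^t\|\Delta u(s)\|^2\,ds$, the ordinary differential inequality $f^4\lesssim f'$ (respectively $f^2\lesssim f'$ when $s=1$), which cannot hold globally. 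Relatedly, your remainder bound $\mathcal R_R\le\eta(R)\mathcal P[u]+CR^{-2}$ is flagged but never established; the paper's actual mechanism is different and more elementary: the exterior term is bounded by $R^{-\tau}\|\sqrt{\mathcal K_{s,\lambda}}u\|^{B-\tau/s}$ via Hardy--Littlewood--Sobolev and Gagliardo--Nirenberg, and is then absorbed into $-\|\sqrt{\mathcal K_{s,\lambda}}u\|^2$ using the uniform lower bound $\|\sqrt{\mathcal K_{s,\lambda}}u(t)\|\ge C_0$ together with the exponent restriction $B\le 2+\tau/s$ (equivalently $p\le 1+\frac{2s+\alpha-\tau}{N}$), which makes the leftover power $B-2-\tau/s\le 0$. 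Without these two repairs --- a separate treatment of \eqref{t13} and an ODI argument replacing convexity for $s=2$ --- your proof does not close.
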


In view of the results stated in the above theorem, some comments 
are in order.
\begin{itemize}
\item In \cite{st4}, the local existence of energy solutions for \eqref{S} 
with $s=2$ was proved under the supplementary assumption 
$0<2\tau<\min\{4(1+\frac\alpha N),-N+8+\alpha\}$.
 Moreover, in \cite{stsubm}, the local existence of energy solutions 
for \eqref{S} with $s=1$ was proved under the supplementary assumption 
$1+\alpha-2\tau>0$. 

\item  The space $ \mathcal A^-$ is proved to be stable under the flow of 
\eqref{S}.

\item  The first part of the Theorem follows the potential well theory 
due to Payne-Sattinger \cite{pyn}.

\item The assumption on the source term exponent, can be written as 
${2<B\leq 2+\frac\tau s}$.

\item The slab $(p_c,1+\frac{2s+\alpha{-\tau}}N]$ has a length of 
${\frac{\tau}{N}}$, which is independent of $s$.

\item The restriction $\lambda\geq0$ is needed in the proof.

\item The above result doesn't extend to the limiting case $\tau=0$, 
which is still an open problem. This gives an essential difference 
between the NLS and the INLS.

\item In a paper in progress, the authors treat the finite time blow-up 
of energy solutions in the mass-critical bi-harmonic regime. 
\end{itemize}

Second, one deals with the non-global existence of energy solutions to 
the Schr\"odinger problem \eqref{S'}.

\begin{theorem}\label{t1'}
Let $s\in\{1,2\}$, $N>2s$, $\lambda\geq0$ and $0<\tau<2$. Assume that 
$q_c< q<q^c$ and $q\leq1+\frac{2s+2\tau(s-1)}N$. 
Take $\psi$ be a ground state solution to \eqref{E'} and 
$u\in C_{T^*}(H^s)$ be a maximal solution of the focusing problem \eqref{S'}. 
Thus, $u$ blows-up in finite time if one of the following assumptions holds
\begin{gather} \label{ss'}
u_0\in \mathcal A'^-, \\
\label{t13'}
\mathcal{MG'}[u_0]>1>\mathcal{ME'}[u_0].
\end{gather}
\end{theorem}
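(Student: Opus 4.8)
The plan is to deduce finite-time blow-up from a concavity argument for a localized virial functional, fed by a variational coercivity statement; this is the mechanism hinted at by the phrase \emph{Morawetz estimates and a non-global ordinary differential inequality}. Since the source term in \eqref{S'} is focusing and inter-critical, I would first show that each of the two hypotheses \eqref{ss'} and \eqref{t13'} forces the scaling functional $\mathcal{J}$ to remain uniformly negative along the flow, and then transfer this negativity, through a virial identity, into a differential inequality that a global solution cannot satisfy.

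The first step is the variational analysis. Using Proposition \ref{gag2} and the Pohozaev identities \eqref{poh'}, one identifies $m'$ with the action $\mathcal{S}'[\psi]$ of the ground state $\psi$ of \eqref{E'}, and proves that $\mathcal{A}'^-$ is invariant under the flow of \eqref{S'}: conservation of $\mathcal{M}$ and $\mathcal{E}'$ together with the sharp Gagliardo--Nirenberg constant forbid a trajectory with $\mathcal{S}'[u_0]<m'$ and $\mathcal{J}[u_0]<0$ from ever reaching $\mathcal{J}=0$. The scale-invariant products $\mathcal{MG}'$ and $\mathcal{ME}'$ then show that in the window $q_c<q<q^c$ (so that $\alpha_c'\in(0,1)$) hypothesis \eqref{t13'} is equivalent to \eqref{ss'}. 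In either formulation the outcome I want is a quantitative coercivity bound of the form $\mathcal{J}[u(t)]\le-\delta\,\|\sqrt{\mathcal{K}_{s,\lambda}}u(t)\|^2$, together with $\|\sqrt{\mathcal{K}_{s,\lambda}}u(t)\|\ge c_0>0$, uniformly for $t\in[0,T^*)$.

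Next comes the Morawetz step. For a radial weight $\phi_R$ that equals $|x|^2$ on $\{|x|\le R\}$ and is constant for $|x|\gtrsim R$, I would set $V_R(t):=\int_{\mathbb{R}^N}\phi_R|u|^2\,dx\ge0$ and compute $V_R''(t)$ from the equation. For $s=1$ a direct computation gives $V_R''(t)=8\,\mathcal{J}[u(t)]+\mathrm{Err}_R(t)$; here the inverse-square potential merges with the kinetic term because $|x|^{-2}$ is scale-covariant, and $\lambda\ge0$ fixes the sign of its contribution, which is exactly the reason the hypothesis $\lambda\ge0$ appears. For $s=2$ one replaces $V_R$ by the higher-order Morawetz action adapted to $(-\Delta)^2$, obtaining the analogous leading term $c_s\,\mathcal{J}[u(t)]$ with $c_s>0$ and additional remainder terms. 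In both cases $\mathrm{Err}_R$ is supported in $\{|x|>R\}$ and consists of kinetic contributions (of size $\int_{|x|>R}|\nabla u|^2\,dx$, and its higher-order analogues when $s=2$) and of a nonlinear contribution controlled by $\int_{|x|>R}|x|^{-2\tau}|u|^{2q}\,dx$.

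The decisive and most delicate step is the absorption of $\mathrm{Err}_R$. Since $\mathcal{M}$ is conserved, the kinetic remainder is $\le\epsilon\,\|\sqrt{\mathcal{K}_{s,\lambda}}u\|^2+o_R(1)$ by interpolation and is swallowed by the coercive term from the variational step. The nonlinear remainder is where the inhomogeneity is indispensable: on $\{|x|>R\}$ the weight $|x|^{-2\tau}$ yields a factor $R^{-\beta}$ for $0<\beta<2\tau$, after which a weighted Gagliardo--Nirenberg estimate bounds the remaining integral by a power of $\|\sqrt{\mathcal{K}_{s,\lambda}}u\|$; the upper constraint $q\le 1+\frac{2s+2\tau(s-1)}N$ is precisely what makes this power strictly sub-quadratic, so that Young's inequality produces once more $\epsilon\,\|\sqrt{\mathcal{K}_{s,\lambda}}u\|^2+C_\epsilon R^{-\sigma}$ with $\sigma>0$. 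This explains why the limiting case $\tau=0$ is out of reach, and for $s=2$ matching the extra biharmonic remainder terms against this decay is the main technical obstacle I anticipate. Choosing $\epsilon$ small and then $R$ large gives $V_R''(t)\le-c_1<0$ uniformly on $[0,T^*)$. As $V_R\ge0$, a global solution would force $t\mapsto V_R(t)$ to be a nonnegative function whose second derivative is bounded above by a fixed negative constant, which is impossible; hence $T^*<\infty$ and $u$ blows up in finite time.
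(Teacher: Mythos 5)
Your variational step and the absorption of the localized Morawetz error terms track the paper's proof closely (Lemmas \ref{ky'} and \ref{2ky'}, the cutoff $\xi_R$, the gain $R^{-2\tau}$ from the weight on $B^c(R)$, the lower bound $\|\sqrt{\mathcal K_{s,\lambda}}u(t)\|\geq c_0$ coming from $\mathcal J[u]<0$, and the sign role of $\lambda\geq0$). The gap is in your concluding concavity argument, and it is fatal for $s=2$. The identity $\frac{d}{dt}\int_{\mathbb{R}^N}\xi_R|u|^2\,dx=M_{\xi_R}[u]$ is specific to second-order Schr\"odinger flows: it comes from the continuity equation $\partial_t|u|^2=-2\nabla\cdot\Im(\bar u\nabla u)$. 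For the bi-harmonic equation, $\partial_t|u|^2$ is the divergence of a current involving third derivatives of $u$, so the Morawetz action \eqref{mrwtz} is \emph{not} the time derivative of the localized variance, and Proposition \ref{mrwz} provides only a formula for $M_\xi'$, with no nonnegative potential $V$ satisfying $V''=M_\xi'$. This is exactly the ``lack of a variance identity'' obstruction which, as the introduction recalls, kept blow-up for fourth-order NLS open for a long time; replacing $V_R$ by ``the higher-order Morawetz action adapted to $(-\Delta)^2$'' does not repair it, because that action has no sign, so the step ``as $V_R\geq0$, a global solution is impossible'' is unjustified when $s=2$.

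The paper avoids positivity altogether: from $M_R'\lesssim-\|\sqrt{\mathcal K_{s,\lambda}}u\|^2$ (your inequality; compare \eqref{3.261} and \eqref{4.26}) it integrates once to get $M_R[u(t)]\lesssim-\int_T^t\|\sqrt{\mathcal K_{s,\lambda}}u(s)\|^2\,ds$, and separately bounds $|M_R[u]|\lesssim R\|\nabla u\|\,\|u\|\lesssim R\|\Delta u\|^{1/2}$ by interpolation and mass conservation; combining the two yields a Riccati-type inequality $f^{4}\lesssim f'$ (for $s=2$; $f^{2}\lesssim f'$ when $s=1$) for $f(t)=\int_T^t\|\Delta u(s)\|^2\,ds$, which admits no global solution, whence $T^*<\infty$. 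For $s=1$ your concavity conclusion is in fact valid, since Proposition \ref{mrwz1} does give $V_{\xi_R}''=M_R'$ and $\xi_R\geq0$, so there your route is even slightly simpler than the paper's ODI; but the theorem asserts blow-up for both $s=1$ and $s=2$, so as written your proof establishes only half of the statement. A minor further point: \eqref{t13'} and \eqref{ss'} are not shown (and need not be shown) to be equivalent; the paper treats them as two separate sufficient conditions, proving the uniform coercivity under \eqref{t13'} by a direct ground-state comparison in the style of Lemma \ref{ky1'} rather than by reducing it to \eqref{ss'}.
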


In view of the  results stated in the above theorem, 
some comments are in order.
\begin{itemize}
\item In \cite{cg,tsuz}, the local existence of energy solutions to 
\eqref{S'} for $s=1$ was proved under the supplementary assumption 
$\tau<1$. In \cite{cg,tsuz,gp}, the local existence of energy solutions 
to \eqref{S'} for $s=2$ was proved under the supplementary assumption 
$q>1+\frac{1-2\tau}{N}$.

\item Assumption \eqref{t13'} is used to prove that \eqref{ss'} is stable 
under the flow of \eqref{S'}.

\item In \cite{bl}, the first author proved the finite time blow-up of 
energy solutions for \eqref{S'} for $s=1$ and $\lambda=0$ under the 
assumption \eqref{t13'}. 
\end{itemize}

\subsection{Useful estimates}
In this sub-section, one gives some standard tools needed in the sequel. 
Let us start with Hardy-Littlewood-Sobolev inequality \cite{el}.

\begin{lemma}\label{hls}
Let $N\geq1$ and $0 <\alpha < N$.
\begin{enumerate}
\item
Let $r>1$ such that $\frac1r=\frac1s+\frac\alpha N$. Then,
$$
\|J_\alpha*g\|_s\leq C_{N,s,\alpha}\|g\|_{r},\quad\forall g\in L^r.
$$

\item Let $1<s,r<\infty$ be such that 
$\frac1r +\frac1s=\frac1t +\frac\alpha N$. Then
$$
\|f(J_\alpha*g)\|_t\leq C_{N,s,\alpha}\|f\|_{r}\|g\|_{s},\quad
\forall (f,g)\in L^r\times L^s.
$$
\end{enumerate}
\end{lemma}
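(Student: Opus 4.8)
The final statement is the classical Hardy–Littlewood–Sobolev inequality, stated in its two standard forms. The plan is to prove part (1) first and then deduce part (2) from it by Hölder's inequality. For part (1), the goal is to bound the convolution $\|J_\alpha*g\|_s$ in terms of $\|g\|_r$ when $\frac1r=\frac1s+\frac\alpha N$; since $J_\alpha$ is (up to a positive normalizing constant) the Riesz kernel $|\cdot|^{\alpha-N}$, this amounts to the weak-type estimate for the Riesz potential $I_\alpha g(x)=\int_{\mathbb{R}^N}|x-y|^{\alpha-N}g(y)\,dy$. The standard route I would take is the following splitting argument. Fix $g\in L^r$ with $\|g\|_r=1$ (by homogeneity) and split the kernel at a radius $\rho>0$: write $I_\alpha g=\int_{|x-y|<\rho}+\int_{|x-y|\geq\rho}$. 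The near piece is controlled by the Hardy–Littlewood maximal function $Mg(x)$ via a dyadic decomposition of the ball, giving a bound of the form $C\rho^\alpha Mg(x)$; the far piece is controlled by Hölder's inequality against the kernel $|\cdot|^{\alpha-N}$ on the exterior region, which is integrable in $L^{r'}$ there because $(\alpha-N)r'+N<0$ in the relevant range, yielding a bound $C\rho^{\alpha-N/r}=C\rho^{\alpha-N/r}\|g\|_r$.

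Having these two bounds, I would optimize in $\rho$. Choosing $\rho$ so that the two contributions balance produces the pointwise estimate
$$
|I_\alpha g(x)|\leq C\,(Mg(x))^{1-\alpha r/N}\,\|g\|_r^{\alpha r/N}.
$$
Raising to the power $s$, integrating, and invoking the Hardy–Littlewood maximal inequality $\|Mg\|_r\leq C\|g\|_r$ (valid since $r>1$) then gives $\|I_\alpha g\|_s\leq C\|g\|_r$ after checking that the exponents match, which they do precisely because of the scaling relation $\frac1r=\frac1s+\frac\alpha N$. Restoring the normalizing constant $C_{N,\alpha}$ from the definition of $J_\alpha$ yields the constant $C_{N,s,\alpha}$ in the statement. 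This completes part (1).

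For part (2), with exponents satisfying $\frac1r+\frac1s=\frac1t+\frac\alpha N$, I would first apply Hölder's inequality to write $\|f(J_\alpha*g)\|_t\leq\|f\|_r\|J_\alpha*g\|_{\sigma}$, where $\frac1t=\frac1r+\frac1\sigma$, so that $\frac1\sigma=\frac1s-\frac\alpha N+\frac{2\alpha}{N}$—more carefully, the exponent $\sigma$ is forced by $\frac1\sigma=\frac1t-\frac1r$, and the hypothesis guarantees $\frac1\sigma=\frac1s-\frac\alpha N$, which is exactly the relation needed to apply part (1) with target exponent $\sigma$ and source exponent $s$. Thus $\|J_\alpha*g\|_\sigma\leq C_{N,s,\alpha}\|g\|_s$, and combining the two inequalities gives the claim.

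The main obstacle, and the only step requiring genuine care, is the optimization and bookkeeping of exponents in part (1): one must verify that the weak-type/maximal-function argument produces exactly the strong-type bound (legitimate since $r>1$, so the endpoint $r=1$ subtlety does not arise here), and that the arithmetic of the scaling exponents closes up consistently with the hypotheses. Everything else is a routine application of Hölder's inequality and the maximal theorem. Since this is a standard result with the reference \cite{el} supplied, I would present the splitting argument in compressed form and cite the literature for the sharp constant rather than computing it explicitly.
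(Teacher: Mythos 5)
Your proposal is correct, but there is nothing internal to compare it against: the paper does not prove this lemma, it simply quotes the Hardy--Littlewood--Sobolev inequality from Lieb--Loss \cite{el}. Your route is the standard self-contained alternative to the rearrangement/layer-cake argument of \cite{el}: Hedberg's splitting of the Riesz kernel at radius $\rho$, the near-piece bound $C\rho^\alpha Mg(x)$ by dyadic annuli, the far-piece bound $C\rho^{\alpha-N/r}\|g\|_r$ by H\"older (integrable since $\frac1r>\frac\alpha N$), optimization in $\rho$ to get the pointwise estimate $|J_\alpha * g|\lesssim (Mg)^{1-\alpha r/N}\|g\|_r^{\alpha r/N}$, and then the maximal theorem (valid as $r>1$). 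The exponent arithmetic does close: $\frac1r=\frac1s+\frac\alpha N$ gives $s(1-\alpha r/N)=r$, so integrating the $s$-th power yields $\|J_\alpha*g\|_s\lesssim\|g\|_r$; and part (2) follows from part (1) by H\"older exactly as you say. What your approach buys is elementarity and self-containedness at the price of non-sharp constants, which is all the lemma asserts; what the citation to \cite{el} buys is sharp constants and the endpoint bilinear form, neither of which the paper uses. Two small points to tidy up: (i) your intermediate formula $\frac1\sigma=\frac1s-\frac\alpha N+\frac{2\alpha}N$ is a slip, though you immediately correct it to $\frac1\sigma=\frac1t-\frac1r=\frac1s-\frac\alpha N$; (ii) both the lemma and your proof implicitly require the exponents to be subcritical, namely $r<N/\alpha$ in part (1) and $s<N/\alpha$ in part (2) (so that $\sigma$ is a finite exponent and $J_\alpha*g$ is finite a.e.); this is harmless here since the paper only invokes the lemma with $r=s=\frac{2N}{\alpha+N}<\frac N\alpha$ and $t=1$, but it is worth flagging when you present the argument.
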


Let $\xi:\mathbb{R}^N\to\mathbb{R}$ be a convex smooth function. We define the variance 
potential
\begin{equation}\label{vrl}
V_\xi:=\int_{\mathbb{R}^N}\xi(x)|u(\cdot,x)|^2\,dx,
\end{equation}
and the Morawetz action
\begin{equation}\label{mrwtz}
M_\xi=2\Im\int_{\mathbb{R}^N} \bar u(\nabla\xi\cdot\nabla u)\,dx
:=2\Im\int_{\mathbb{R}^N} \bar u(\xi_ju_j)\,dx,
\end{equation}
{where here and sequel, repeated indices are summed.}
 Let us give a Morawetz type estimate for the Schr\"odinger equation 
 with inverse square potential \cite{ajk}.
 
\begin{proposition}\label{mrwz1}
Take $u, v\in C_{T^*}(H^1)$ be the local solutions to \eqref{S} and 
\eqref{S'} for $s=1$, respectively. Let $\xi:\mathbb{R}^N\to\mathbb{R}$ be a smooth function. 
Then, the following equality holds on $[0,T^*)$,
\begin{align*}
&V_\xi''[u]=M_\xi'[u]\\
&=4\int_{\mathbb{R}^N}\partial_l\partial_k\xi\Re(\partial_ku\partial_l\bar u)\,dx
 -\int_{\mathbb{R}^N}\Delta^2\xi|u|^2\,dx+4\lambda\int_{\mathbb{R}^N}\nabla\xi
  \cdot x\frac{|u|^2}{|x|^4}\,dx\\
&\quad +2(\frac2p-1)\int_{\mathbb{R}^N}\Delta\xi|x|^{-\tau}|u|^p(J_\alpha*|
 \cdot|^{-\tau}|u|^p)\,dx
 +\frac{4}p\int_{\mathbb{R}^N}\nabla\xi\cdot\nabla(|x|^{-\tau})|u|^p
  \big(J_\alpha*|\cdot|^{-\tau}|u|^p\big)\,dx\\
&\quad+ \frac{4}p(\alpha-N)\int_{\mathbb{R}^N}|x|^{-\tau}|u|^p\nabla
 \xi(\frac\cdot{|\cdot|^2}J_\alpha*|\cdot|^{-\tau}|u|^p)\,dx.
\end{align*}
Moreover,
\begin{align*}
V_\xi''[v]=M_\xi'[v]
&= 4\int_{\mathbb{R}^N}\partial_l\partial_k\xi\Re(\partial_kv\partial_l\bar v)\,dx
 -\int_{\mathbb{R}^N}\Delta^2\xi|v|^2\,dx+4\lambda\int_{\mathbb{R}^N}\nabla\xi
  \cdot x\frac{|v|^2}{|x|^4}\,dx\\
&\quad +2(\frac1q-1)\int_{\mathbb{R}^N}\Delta\xi|x|^{-2\tau}|v|^{2q}\,dx
 +\frac{2}q\int_{\mathbb{R}^N}\nabla\xi\cdot\nabla(|x|^{-2\tau})|v|^{2q}\,dx.
\end{align*}
\end{proposition}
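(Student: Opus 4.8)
The plan is to compute the first and second time derivatives of the variance potential $V_\xi[u]$ defined in \eqref{vrl}, relying on the equation \eqref{S} (resp.\ \eqref{S'}) to eliminate the time derivative of $u$. First I would differentiate $V_\xi$ in time and use $\mathrm{i}\partial_t u=\mathcal K_{1,\lambda}u-|x|^{-\tau}|u|^{p-2}(J_\alpha*|\cdot|^{-\tau}|u|^p)u$ to replace $\partial_t u$ and $\partial_t\bar u$. Since $\xi$ is real and the mass density evolves by a continuity-type law, the nonlinear and potential terms cancel in pairs (they contribute purely imaginary quantities that vanish after taking twice the real part), leaving the identity $V_\xi'[u]=M_\xi[u]$ with $M_\xi$ as in \eqref{mrwtz}. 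Concretely, writing $\partial_t|u|^2=2\Re(\bar u\,\partial_t u)=-2\Im(\bar u\,\mathcal K_{1,\lambda}u)+(\text{nonlinear})$ and integrating against $\xi$, the inverse-square potential and the focusing source term drop out because they are real multiples of $|u|^2$, so only the $-\Delta$ part survives and produces the Morawetz action.

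The core of the argument is the second differentiation $M_\xi'[u]=V_\xi''[u]$. Here I would again substitute the equation into $\partial_t(\bar u\,\xi_j u_j)$ and integrate by parts repeatedly against the weight $\nabla\xi$, splitting the contributions by the three pieces of the operator and the source term. The bi-Laplacian contribution $\int\partial_l\partial_k\xi\,\Re(\partial_k u\,\partial_l\bar u)-\tfrac14\int\Delta^2\xi\,|u|^2$ is the standard kinetic Morawetz output after two integrations by parts; the inverse-square term $\lambda\int\frac{|u|^2}{|x|^2}$, upon commuting with $\nabla\xi\cdot\nabla$, yields the boundary-free term $4\lambda\int\nabla\xi\cdot x\,\tfrac{|u|^2}{|x|^4}$ coming from $\nabla(|x|^{-2})=-2x|x|^{-4}$. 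The genuinely delicate part is the nonlocal source term: one must differentiate $\mathcal P[u]$ in time, use the symmetry of the Riesz kernel $J_\alpha$ under $x\leftrightarrow y$ to symmetrize the double integral, and then integrate by parts in both variables. The derivative of the kernel $\nabla J_\alpha(x-y)=(\alpha-N)\tfrac{x-y}{|x-y|^2}J_\alpha(x-y)$ generates the last term carrying the factor $\tfrac4p(\alpha-N)$, while differentiating the singular weight $|x|^{-\tau}$ gives the term with $\nabla\xi\cdot\nabla(|x|^{-\tau})$, and the factor $2(\tfrac2p-1)$ in front of $\int\Delta\xi\,\mathcal P$-density arises from collecting the divergence terms after symmetrization.

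I expect the main obstacle to be the rigorous handling of the nonlocal and singular contributions, for two reasons. First, the symmetrization of the Hartree term requires care: the naive computation produces an expression where the roles of $x$ and $y$ are asymmetric, and one must exploit $J_\alpha(x-y)=J_\alpha(y-x)$ together with a change of variables to recombine the pieces into the stated divergence structure; tracking the exact constants $2(\tfrac2p-1)$, $\tfrac4p$, and $\tfrac4p(\alpha-N)$ demands bookkeeping that is easy to get wrong. Second, all the integrations by parts must be justified in the energy space $H^1$ despite the singularities $|x|^{-2}$, $|x|^{-\tau}$, and $|x-y|^{\alpha-N}$; this is where the hypotheses $\lambda>-(N-2)^2/4$, the Hardy inequality \eqref{prt}, condition \eqref{cnd}, and the Hardy--Littlewood--Sobolev bound of Lemma \ref{hls} enter, guaranteeing finiteness of every integral. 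For $s=1$ the chosen $\xi$ is smooth so boundary terms at infinity vanish by the decay of $H^1$ functions, and the identity for $v$ solving \eqref{S'} follows the identical scheme with the local source $|x|^{-2\tau}|v|^{2q}$ in place of the Hartree nonlinearity, which is why its Morawetz output contains only the two simpler terms $2(\tfrac1q-1)\int\Delta\xi\,|x|^{-2\tau}|v|^{2q}$ and $\tfrac2q\int\nabla\xi\cdot\nabla(|x|^{-2\tau})|v|^{2q}$ and no kernel-derivative contribution.
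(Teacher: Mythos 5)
The paper contains no proof of Proposition \ref{mrwz1}: it is imported verbatim from the reference \cite{ajk} (just as its bi-harmonic counterpart, Proposition \ref{mrwz}, is imported from \cite{st4}), so there is no internal argument to compare your plan against. That said, your outline is the standard virial/Morawetz derivation, which is exactly how such identities are established in the cited literature: differentiate $V_\xi$ of \eqref{vrl} once, observe that the inverse-square potential and the source term contribute nothing because $\lambda|x|^{-2}|u|^2$ and $\bar u F(x,u)$ are real, so that $V_\xi'=M_\xi$ as in \eqref{mrwtz}; then differentiate $M_\xi$, substitute \eqref{S} (resp.\ \eqref{S'}), and integrate by parts, with $\nabla(|x|^{-2})=-2x|x|^{-4}$ producing the $\lambda$-term, the weight derivative $\nabla(|x|^{-\tau})$ producing the $\frac4p$-term, and the kernel gradient $\nabla J_\alpha(x)=(\alpha-N)\frac{x}{|x|^2}J_\alpha(x)$, after symmetrization in $x\leftrightarrow y$, producing the $\frac4p(\alpha-N)$-term; the local case \eqref{S'} is the same scheme without the kernel contribution. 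Two caveats. First, a minor slip: with the paper's convention $\mathrm{i}\partial_t u=\mathcal K_{1,\lambda}u-F$, one gets $\partial_t|u|^2=+2\Im(\bar u\,\mathcal K_{1,\lambda}u)$, not $-2\Im(\bar u\,\mathcal K_{1,\lambda}u)$; your final identity $V_\xi'=M_\xi$ is nevertheless the correct one, so this is a bookkeeping error in an intermediate line rather than a structural flaw. Second, what you present is a plan: the actual content of the proposition is precisely the coefficient bookkeeping ($2(\frac2p-1)$, $\frac4p$, $\frac4p(\alpha-N)$) and the regularization/density argument needed to justify the formal manipulations for solutions that are merely in $C_{T^*}(H^1)$; you correctly identify Hardy, \eqref{cnd}, and Lemma \ref{hls} as the tools for that, but you do not carry it out — which is acceptable here only because the paper itself delegates the proof to \cite{ajk}.
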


Finally, one gives a Morawetz estimate for the bi-harmonic Schr\"odinger 
equation \cite{st4}.

\begin{proposition}\label{mrwz}
Take $u,v\in C_{T^*}(H^2)$ be the local solutions to \eqref{S} and 
\eqref{S'} for $s=2$, respectively. Let $\xi:\mathbb{R}^N\to\mathbb{R}$ be a smooth 
function. Then, the following equalities hold on $[0,T^*)$,
\begin{gather} \label{mrw1}
\begin{aligned}
M_\xi'[u]
&=-2\int_{\mathbb{R}^N}\Big(2\partial_{jk}\Delta\xi\partial_ju\partial_k
 \bar u-\frac12(\Delta^3\xi)|u|^2-4\partial_{jk}
 \xi\partial_{ik}u\partial_{ij}\bar u+\Delta^2\xi|\nabla u|^2\Big)
 \,dx \\
&\quad -2\Big((1-\frac2p)\int_{\mathbb{R}^N}\Delta\xi(J_\alpha*|
\cdot|^{-\tau}|u|^p)|x|^{-\tau}|u|^p\,dx \\
&\quad -\frac2{p}\int_{\mathbb{R}^N}\partial_k\xi\partial_k(|x|^{-\tau}[J_\alpha*|
 \cdot|^{-\tau}|u|^p])|u|^p\,dx\Big),
\end{aligned}\\
\begin{aligned}
M_\xi'[v]
&=-2\int_{\mathbb{R}^N}\Big(2\partial_{jk}\Delta\xi\partial_jv\partial_k\bar v
 -\frac12(\Delta^3\xi)|v|^2-4\partial_{jk}\xi\partial_{ik}v\partial_{ij}
  \bar v \\
&\quad +\Delta^2\xi|\nabla v|^2+\frac{q-1}{q}(\Delta\xi)|x|^{-2\tau}|v|^{2q}
 -\frac1q\nabla\xi\cdot\nabla(|x|^{-2\tau})|v|^{2q}\Big)\,dx
\end{aligned}\label{mrw2}.
\end{gather}
\end{proposition}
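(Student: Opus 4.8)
The plan is to differentiate the Morawetz action $M_\xi[u]=2\Im\int_{\mathbb{R}^N}\bar u\,\xi_j u_j\,dx$ in time, substitute the evolution equation, and split the result into a purely kinetic (bi-harmonic) contribution and a nonlinear one; the same scheme then applies verbatim to $v$. Since for $s=2$ one has $\mathcal K_{2,\lambda}=\Delta^2$, equation \eqref{S} reads $\mathrm i\partial_t u=\Delta^2u-N(u)$ with $N(u)=|x|^{-\tau}|u|^{p-2}(J_\alpha*|\cdot|^{-\tau}|u|^p)u$, so that $\partial_t u=-\mathrm i\Delta^2u+\mathrm iN(u)$. Using $\Im(\mathrm iz)=\Re(z)$, I first obtain
\begin{align*}
M_\xi'[u]&=2\Re\int_{\mathbb{R}^N}\big(\Delta^2\bar u\,\xi_j u_j-\bar u\,\xi_j\Delta^2u_j\big)\,dx\\
&\quad-2\Re\int_{\mathbb{R}^N}\overline{N(u)}\,\xi_j u_j\,dx+2\Re\int_{\mathbb{R}^N}\bar u\,\xi_j\partial_jN(u)\,dx.
\end{align*}
All computations are first carried out for smooth, spatially decaying solutions and then extended to an arbitrary $u\in C_{T^*}(H^2)$ by a standard approximation argument resting on the local theory.

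For the nonlinear part I would exploit two gauge identities. First, $\bar uN(u)=|x|^{-\tau}|u|^p(J_\alpha*|\cdot|^{-\tau}|u|^p)$ is real. Second, since $\Re(u\bar u_j)=\frac12\partial_j|u|^2$, one has $\Re\big(\bar u_jN(u)\big)=\frac1p|x|^{-\tau}(J_\alpha*|\cdot|^{-\tau}|u|^p)\,\partial_j|u|^p$. Integrating the last integral above by parts via $\int\bar u\,\xi_j\partial_jN(u)=-\int(\bar u_j\xi_j+\bar u\,\Delta\xi)N(u)$ and writing $W:=|x|^{-\tau}(J_\alpha*|\cdot|^{-\tau}|u|^p)$, the three nonlinear contributions collect into $-\frac4p\int_{\mathbb{R}^N}\xi_jW\,\partial_j|u|^p\,dx-2\int_{\mathbb{R}^N}\Delta\xi\,W|u|^p\,dx$. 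A final integration by parts on the first integral, using $\partial_j(\xi_jW)=\Delta\xi\,W+\xi_j\partial_jW$, reproduces exactly the two nonlinear terms of \eqref{mrw1}, with coefficients $-2(1-\frac2p)$ in front of the $\Delta\xi$-integral and $\frac4p$ in front of the $\partial_k\xi\,\partial_k(|x|^{-\tau}[J_\alpha*|\cdot|^{-\tau}|u|^p])$-integral. The power case \eqref{mrw2} is identical, with $N(v)=|x|^{-2\tau}|v|^{2q-2}v$ and $\Re(\bar v_jN(v))=\frac1{2q}|x|^{-2\tau}\partial_j|v|^{2q}$, which yields the coefficients $\frac{q-1}q$ and $\frac1q$.

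The main obstacle is the kinetic term $2\Re\int_{\mathbb{R}^N}(\Delta^2\bar u\,\xi_j u_j-\bar u\,\xi_j\Delta^2u_j)\,dx$, whose treatment is the only genuinely laborious point. Here I would integrate by parts repeatedly so as to balance the four derivatives of $\Delta^2=\partial_{iikk}$ symmetrically between $u$ and $\bar u$: moving two derivatives in each summand turns the first integral into $\int\Delta\bar u\,\Delta(\xi_ju_j)\,dx$ and the second into $\int\Delta(\bar u\,\xi_j)\,\partial_j\Delta u\,dx$. Expanding by $\Delta(\xi_ju_j)=(\Delta\xi)_ju_j+2\xi_{jk}u_{jk}+\xi_j(\Delta u)_j$ and the analogous identity for $\Delta(\bar u\,\xi_j)$, the two $\Delta\bar u\,\xi_j\,\partial_j\Delta u$ pieces cancel in the difference, and the surviving terms are symmetrized by further integration by parts. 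It is in this symmetrization that the delicate index bookkeeping under the summation convention occurs, and where the top-order weight term $-\frac12\Delta^3\xi\,|u|^2$ arises from transferring all four derivatives onto $\xi$ in the scalar piece. Grouping the output into the invariants $\partial_{jk}\Delta\xi\,u_j\bar u_k$, $\Delta^3\xi\,|u|^2$, $\partial_{jk}\xi\,u_{ik}\bar u_{ij}$ and $\Delta^2\xi\,|\nabla u|^2$ produces the kinetic part of \eqref{mrw1}; since this computation never sees the nonlinearity, the identical expression serves for \eqref{mrw2}. I expect the sign- and coefficient-tracking in this step to be the whole difficulty, the nonlinear part being routine once the two gauge identities are in place.
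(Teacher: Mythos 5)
Your derivation is correct, but note that the paper itself contains no proof of Proposition \ref{mrwz}: the identity is imported from \cite{st4}, just as its second-order counterpart, Proposition \ref{mrwz1}, is imported from \cite{ajk}, so the real comparison is between your self-contained computation and a citation. Your nonlinear bookkeeping is complete and the coefficients check out: writing $W=|x|^{-\tau}(J_\alpha*|\cdot|^{-\tau}|u|^p)$, the three contributions do collapse to $-\frac4p\int\xi_jW\partial_j|u|^p\,dx-2\int\Delta\xi\,W|u|^p\,dx$, and the last integration by parts yields exactly the factors $-2(1-\frac2p)$ and $\frac4p$ of \eqref{mrw1}, while the pointwise identity $\Re(\bar v_jN(v))=\frac1{2q}|x|^{-2\tau}\partial_j|v|^{2q}$ gives the factors $\frac{q-1}q$ and $\frac1q$ of \eqref{mrw2}. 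Your kinetic outline is the classical virial computation for biharmonic NLS in the spirit of \cite{bhl}, and it does close exactly as you predict: after the cancellation of the $\xi_j\Delta\bar u\,\partial_j\Delta u$ pieces one is left with
\begin{equation*}
K:=2\Re\int_{\mathbb{R}^N}\big(\Delta^2\bar u\,\xi_ju_j-\bar u\,\xi_j\partial_j\Delta^2u\big)\,dx
=2\Re\int_{\mathbb{R}^N}(\partial_j\Delta\xi)\big(\Delta\bar u\,u_j-\bar u\,\partial_j\Delta u\big)\,dx
+4\Re\int_{\mathbb{R}^N}\xi_{jk}\big(\Delta\bar u\,u_{jk}-\bar u_k\partial_j\Delta u\big)\,dx,
\end{equation*}
where the first term reduces to $\int\Delta^3\xi|u|^2\,dx-4\int\partial_{jk}\Delta\xi\,\Re(u_j\bar u_k)\,dx$ (its two by-products $\pm2\int\Delta^2\xi|\nabla u|^2\,dx$ cancel each other), and in the second term the two integrals carrying $\partial_{ijk}\xi$ cancel by the total symmetry of the third derivatives, leaving $8\Re\int\xi_{jk}u_{ik}\bar u_{ij}\,dx-2\int\Delta^2\xi|\nabla u|^2\,dx$; the sum is precisely the kinetic block common to \eqref{mrw1} and \eqref{mrw2}. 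Two points you defer would have to be written out in a submitted proof: the symmetrization just described, which is the actual content of the lemma and costs only half a page, and the approximation argument extending the identity from smooth decaying solutions to $u\in C_{T^*}(H^2)$, which is genuinely needed since the formal computation spends four derivatives on $u$; your plan for both is the standard and correct one. What your route buys over the paper's citation is verifiability and flexibility (the same computation absorbs changes in the weight or the nonlinearity); what the citation buys is brevity.
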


The next radial identities will be useful in the sequel.
\begin{gather}
\frac{\partial^2}{\partial x_l\partial x_k}
 :=\partial_l\partial_k=\Big(\frac{\delta_{lk}}r
  -\frac{x_lx_k}{r^3}\Big)\partial_r
  +\frac{x_lx_k}{r^2}\partial_r^2, \label{symm}\\
\Delta=\partial_r^2+\frac{N-1}r\partial_r, \label{sym1}\\
\nabla=\frac x{r}\partial_r\label{sym2}.
\end{gather}
In the rest of this note, one takes a smooth radial function 
$\xi(x):=\xi(|x|)$ such that 
$$
\xi:r\to\begin{cases}
r^2,&\text{if } 0\leq r\leq1;\\
0,  &\text{if }  r\geq10.
\end{cases}
$$
So, on the unit ball of $\mathbb{R}^N$, one has
$$
\xi_{ij} =2\delta_{ij},\quad
\Delta\xi =2N,\quad \partial^\gamma\xi= 0 \quad\text{for }
 |\gamma| \geq 3.
$$
Now, for $R>0$,  via \eqref{mrwtz}, one takes
$$
\xi_R:=R^2\xi(\frac{|\cdot|}R)\quad\text{and}\quad M_R:=M_{\xi_R}.
$$
By \cite[Lemma 2.1]{vdd2}, one can impose that
\begin{align}\label{prpr}
\max\{\frac{\xi_R'}r-2, \xi_R''-\frac{\xi_R'}r\}\leq 0.
\end{align}
From now one hides the time variable $t$ for simplicity, 
displaying it out only when necessary.
Moreover, one denotes the centered ball of $\mathbb{R}^N$ with radius $R>0$ 
and its complementary, respectively by $B(R)$ and $B^c(R)$.
In what follows, one proves the main results of this note.

\section{Schr\"odinger equation with non-local source term}

In this section, we establish Theorem \ref{t1}.

\subsection{Schr\"odinger equation with inverse square potential}
In this sub-section, one takes $s=1$. 

\subsubsection*{First case}
Assume that \eqref{ss} holds. We start with the next auxiliary result.

\begin{lemma}\label{ky}
\begin{enumerate}
\item The set $\mathcal{A}^-$ is stable under the flow of \eqref{S}.
\item There exists $\varepsilon>0$, such that for any $t\in [0, T^*)$,
\begin{equation}
 \mathcal I[u(t)]+\varepsilon \|\sqrt{\mathcal K_\lambda}u(t)\|^2
 \leq-\frac B4\big(m-\mathcal{S}[u(t)]\big).\label{nw}
\end{equation}
\end{enumerate}
\end{lemma}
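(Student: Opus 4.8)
The plan is to argue by the potential-well/variational method, using only the conservation of mass $\mathcal M$ and energy $\mathcal E$ (hence of the action $\mathcal S$), the variational characterization \eqref{m} of $m$, and the Gagliardo–Nirenberg inequality \eqref{ineq}. Throughout I take $s=1$ and write $K:=\|\sqrt{\mathcal K_\lambda}u\|^2$, $M:=\mathcal M[u]$, $P:=\mathcal P[u]$, $\mathcal S_0:=\mathcal S[u_0]$, recalling that the hypothesis on the exponent amounts to $B>2$.

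For part (1), I first note that $\mathcal S[u(t)]=\mathcal S[u_0]<m$ for all $t\in[0,T^*)$, since $\mathcal S=\mathcal E+\mathcal M$ is conserved. The map $t\mapsto\mathcal I[u(t)]$ is continuous, because $u\in C_{T^*}(H^1)$ and both $K$ and $\mathcal P$ are continuous on $H^1$ (the latter via Lemma \ref{hls} and the relevant Sobolev embeddings). Mass conservation and $u_0\neq0$ give $\mathcal M[u(t)]=\mathcal M[u_0]>0$, so $u(t)\neq0$. If $\mathcal I[u(t)]$ were to vanish at some first time $t_\ast$, then $u(t_\ast)$ would be a nonzero admissible competitor in \eqref{m}, forcing $\mathcal S[u(t_\ast)]\ge m$, which contradicts $\mathcal S[u(t_\ast)]=\mathcal S[u_0]<m$. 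Hence $\mathcal I[u(t)]<0$ persists and $\mathcal A^-$ is invariant.

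For part (2), the key device is the $L^2$-invariant rescaling $u_\eta:=\eta^{N/2}u(\eta\,\cdot)$, under which $\mathcal M[u_\eta]=M$, $\|\sqrt{\mathcal K_\lambda}u_\eta\|^2=\eta^2K$ and $\mathcal P[u_\eta]=\eta^{B}P$ (the inverse-square term scales like $\|\nabla\cdot\|^2$, and $B$ is exactly the homogeneity of $\mathcal P$). A direct computation produces the cancellation $\mathcal S[u_\eta]-\tfrac2B\mathcal I[u_\eta]=\tfrac{B-2}{B}\eta^2K+M$, where the singular nonlinear terms drop out. Since $\mathcal I[u]<0$ and $B>2$, the map $\eta\mapsto\mathcal I[u_\eta]=\eta^2K-\tfrac{B}{2p}\eta^{B}P$ has a unique zero $\eta_0\in(0,1)$. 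Evaluating the cancellation identity at $\eta=\eta_0$, where $\mathcal I=0$ forces $\mathcal S[u_{\eta_0}]\ge m$ by \eqref{m}, and at $\eta=1$, then subtracting, yields the coercive bound
\begin{equation*}
\mathcal I[u]+\tfrac{B-2}{2}\,(1-\eta_0^2)\,K\le-\tfrac{B}{2}\big(m-\mathcal S[u]\big).
\end{equation*}

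It remains to replace $\tfrac{B-2}{2}(1-\eta_0^2)$ by a fixed $\varepsilon>0$, and here lies the main obstacle: $\eta_0$ depends on $t$, so $1-\eta_0^2$ must be bounded below away from $0$ uniformly in time. I would do this in two steps. First, $\mathcal I[u]<0$ together with \eqref{ineq} gives $K<\tfrac{B}{2p}\mathcal P[u]\le C\,K^{B/2}M^{A/2}$, whence (using $M=M[u_0]$ fixed and $B>2$) a uniform lower bound $K(t)\ge\kappa_0>0$. Second, the conservation of $\mathcal S$ yields the exact identity $\mathcal I[u]=\tfrac{B}{2}(\mathcal S_0-M)-\tfrac{B-2}{2}K$, so that $\eta_0^{B-2}=\tfrac{K}{K-\mathcal I[u]}=\big(1+\tfrac{-\mathcal I[u]}{K}\big)^{-1}$ with $\tfrac{-\mathcal I[u]}{K}=\tfrac{B-2}{2}+\tfrac{B}{2}\,\tfrac{M-\mathcal S_0}{K}$; as a function of $K\in[\kappa_0,\infty)$ this is monotone with positive limit $\tfrac{B-2}{2}$ at infinity and positive value at $\kappa_0$, hence has a positive infimum $\eta_\ast$. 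This gives $\eta_0^{B-2}\le(1+\eta_\ast)^{-1}<1$, so $1-\eta_0^2\ge\delta_0>0$ uniformly. Plugging this into the coercive bound and discarding half of the positive gap $m-\mathcal S[u]$ to pass from $\tfrac{B}{2}$ to $\tfrac{B}{4}$, I obtain \eqref{nw} with $\varepsilon:=\tfrac{(B-2)\delta_0}{2}$. The delicate point throughout is precisely this uniform-in-time separation of $\eta_0$ from $1$, equivalently the uniform lower bound on $\|\sqrt{\mathcal K_\lambda}u(t)\|$, which is where the restriction $\lambda\ge0$ and $B>2$ are used.
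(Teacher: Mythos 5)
Your part (1) and the first half of part (2) are correct and close in spirit to the paper: the paper uses the same rescaling $u_\rho=\rho^{N/2}u(\rho\cdot)$, finds $\rho_0\in(0,1)$ with $\mathcal I[u_{\rho_0}]=0$, and derives $\mathcal I[u]\le-\frac B2\big(m-\mathcal S[u]\big)$ (its \eqref{rv5}) by integrating the differential inequality $\big(\rho\digamma'(\rho)\big)'\le B\digamma'(\rho)$. Your route via the cancellation identity $\mathcal S[u_\eta]-\frac2B\mathcal I[u_\eta]=\frac{B-2}{B}\eta^2K+M$, evaluated at $\eta=\eta_0$ and $\eta=1$, is a clean alternative and even gives the stronger bound with the extra term $\frac{B-2}{2}(1-\eta_0^2)K$.

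The gap is in your last step, the uniform-in-time lower bound on $1-\eta_0^2$. You claim that $\frac{-\mathcal I[u]}{K}=\frac{B-2}{2}+\frac B2\,\frac{M-\mathcal S_0}{K}$, viewed as a function of $K\in[\kappa_0,\infty)$, has ``positive value at $\kappa_0$'' and hence a positive infimum. This is unjustified, and can fail, when $\mathcal E[u_0]=\mathcal S_0-M>0$: that case is perfectly compatible with $u_0\in\mathcal A^-$ (the condition $\mathcal I[u_0]<0$ only forces $\mathcal E[u_0]<\frac{B-2}{B}\|\sqrt{\mathcal K_\lambda}u_0\|^2$), the function is then \emph{increasing} in $K$ with value $\frac{B-2}{2}-\frac B2\frac{\mathcal E[u_0]}{\kappa_0}$ at $\kappa_0$, and $\kappa_0$, which comes from Gagliardo--Nirenberg and mass conservation, bears no relation to $\mathcal E[u_0]$, so this value may be negative. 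Knowing only $K(t)\ge\kappa_0$ together with the pointwise positivity of $-\mathcal I(t)/K(t)$ does not prevent the infimum of $-\mathcal I/K$ along the trajectory from being $0$ (equivalently $\eta_0(t)\to1$ along a sequence of times), so $\delta_0>0$ is not established. The fix is already contained in your own estimate: dropping the positive term in your coercive bound gives $\mathcal I[u(t)]\le-\frac B2(m-\mathcal S_0)$ uniformly in $t$; plugging this into your identity $\mathcal I=\frac B2(\mathcal S_0-M)-\frac{B-2}{2}K$ yields $K(t)\ge\frac{B(m-M)}{B-2}$, and since $\mathcal S_0<m$ means $\mathcal E[u_0]<m-M$, one obtains $\frac{-\mathcal I}{K}\ge\frac{B-2}{2}\cdot\frac{m-\mathcal S_0}{m-M}>0$ uniformly. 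Alternatively, the paper bypasses $\eta_0$ entirely: it writes $K=\frac{B}{B-2}\big(\mathcal S[u]-\frac2B\mathcal I[u]-\|u\|^2\big)$, computes $\mathcal I+\varepsilon K=\big(1-\frac{2\varepsilon}{B-2}\big)\mathcal I+\varepsilon\frac{B}{B-2}\big(\mathcal S[u]-\|u\|^2\big)$, and concludes \eqref{nw} from \eqref{rv5} for $\varepsilon$ small, which is legitimate because $m-\mathcal S[u(t)]=m-\mathcal S_0$ is a fixed positive constant; this purely algebraic step is what should replace your $\kappa_0$ discussion. (A minor remark: $\lambda\ge0$ is not actually needed in this lemma, since \eqref{norm} and Proposition \ref{gag} hold for all $\lambda>-(N-2)^2/4$; the sign of $\lambda$ only enters later, in the Morawetz estimates.)
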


\begin{proof}
(1) Assume that $u_0\in\mathcal{A}^-$ and that there is $0<t_0<T^*$ such that
$u(t_0)\notin\mathcal{A}^-$. This implies that $\mathcal{I}[u(t_0)]\geq0$ 
and by a continuity argument, there is $0<t_1$ such that 
$\mathcal{I}[u(t_1)]=0$ and $\mathcal S[u(t_1)]<m$. 
This contradicts the definition of $m$ and proves the first point of 
Lemma \ref{ky}.
 
(2) Now, taking the scaling $u_\rho:=\rho^{\frac N2}u(\rho\cdot)$ for 
$\rho>0$, we compute 
\begin{gather}
\|u_\rho\|=\|u\|;\label{scl1}\\
\|\sqrt{\mathcal K_\lambda}u_\rho\|
= \rho\|\sqrt{\mathcal K_\lambda}u\|;\label{scl2}\\
\mathcal P[u_\rho]= \rho^B\mathcal P[u]\label{scl4}.
\end{gather}
Moreover, take the real function $\digamma:\rho\mapsto \mathcal{S}[u_\rho]$, 
we obtain $\digamma(\rho)=\rho^2\|\sqrt{\mathcal K_\lambda}u\|^2+\|u\|^2
-\frac{\rho^B}{p}\mathcal P[u]$ and the first derivative reads
\begin{equation}
 \digamma'(\rho)=2\rho\|\sqrt{\mathcal K_\lambda}u\|^2
-B\frac{\rho^{B-1}}{p}\mathcal P[u]=2\rho^{-1}\mathcal I[u_\rho]\label{rv1}.
\end{equation}
Hence, this implies
\begin{equation}
 \rho\digamma'(\rho)=2\rho^2\|\sqrt{\mathcal K_\lambda}u\|^2
 -B\frac{\rho^{B}}{p}\mathcal P[u]=2\mathcal I[u_\rho].\label{rv2}
\end{equation}
Moreover, since $B>2$, we obtain
\begin{equation}
\big(\rho\digamma'(\rho)\big)'
 =4\rho\|\sqrt{\mathcal K_\lambda}u\|^2-B^2\frac{\rho^{B-1}}{p}\mathcal P[u] 
 =B\digamma'(\rho)-2(B-2)\rho\|\sqrt{\mathcal K_\lambda}u\|^2
 \leq B\digamma'(\rho).\label{rv3}
\end{equation}
Now, we claim that there exists $\rho_0\in(0,1)$ such that
\begin{equation}
 \mathcal I[u_{\rho_0}]=0.\label{claim}
\end{equation}
Indeed, by \eqref{scl2} and \eqref{scl4}, we have
\begin{equation}
 \mathcal I[u_{\rho}]
=\rho^2\Big(\|\sqrt{\mathcal K_\lambda}u\|^2-\frac{\rho^{B-2}}{2p}
 \mathcal P[u]\Big) 
:=\rho^2\aleph(\rho).\label{rv4}
\end{equation}
Note that $\aleph(0)>0$ and $\aleph(1)=\mathcal I[u]<0$, thus there 
exists $\rho_0\in(0,1)$ such that $\aleph(\rho_0)=0$, the claim is proved.
Thus, by \eqref{rv1}, we have $\digamma'(\rho_0)=0$ and 
$\digamma(\rho_0)=\mathcal S[u_{\rho_0}]\geq m$. 
Hence, an integration of \eqref{rv3} on $[\rho_0,1]$ gives
\[
 \digamma'(1)-\rho_0\digamma'(\rho_0)\leq B  \digamma(1)
 -B  \digamma(\rho_0).
\]
Note that $ \digamma'(1)=2\mathcal I[u]$, 
$\rho_0\digamma'(\rho_0)=2\mathcal I[u_{\rho_0}]=0$, and 
$ \digamma(1)=\mathcal S[u]$, the above inequality further implies 
\begin{align}
 \mathcal{I}[u] \leq-\frac B2\big(m-\mathcal{S}[u]\big).\label{rv5}
\end{align}
On the other hand, we write
\begin{equation}
 \|\sqrt{\mathcal K_\lambda}u\|^2
 =\frac{B}{B-2}\big(\mathcal{S}[u]-\frac2B\mathcal{I}[u]-\|u\|^2\big).
\end{equation}
Hence, by \eqref{rv5}, there exists $0<\varepsilon\ll1$, such that
\begin{equation}
\begin{aligned}
\mathcal{I}[u]+\varepsilon\|\sqrt{\mathcal K_\lambda}u\|^2
&=\Big(1-\frac{2\varepsilon}{B-2}\Big)\mathcal{I}[u]
 +\varepsilon\frac{B}{B-2} \big(\mathcal{S}[u]-\|u\|^2\big) \\
&\leq-\frac B2\Big(1-\frac{2\varepsilon}{B-2}\Big)
 \big(m-\mathcal{S}[u]\big)+\varepsilon\frac{B}{B-2} \mathcal{S}[u] \\
&\leq-\frac B4\big(m-\mathcal{S}[u]\big)
\end{aligned}\label{4.251}.
\end{equation}
The last statement of Lemma \ref{ky} is proved by \eqref{4.251}.
\end{proof}

Now we turn to the proof of the main results. 
Taking into account Proposition \ref{mrwz1}, one has $M_R':=(L)+(N)$, where
\[
(L)=-\int_{\mathbb{R}^N}\Delta^2\xi_R|u|^2\,dx
 +4\int_{\mathbb{R}^N}\partial_l\partial_k\xi_R\Re(\partial_ku\partial_l\bar u)\,dx
 +4\lambda\int_{\mathbb{R}^N}\nabla\xi_R\cdot x\frac{|u|^2}{|x|^4}\,dx,
\]
and
\begin{align*}
(N)
&=2(\frac2p-1)\int_{\mathbb{R}^N}\Delta\xi_R|x|^{-\tau}|u|^p(J_\alpha*
 |\cdot|^{-\tau}|u|^p)\,dx\\
&\quad -\frac{4\tau}p\int_{\mathbb{R}^N}x\cdot\nabla\xi_R|x|^{-\tau-2}|u|^p
 \big(J_\alpha*|\cdot|^{-\tau}|u|^p\big)\,dx\\
&\quad +\frac{4}p(\alpha-N)\int_{\mathbb{R}^N}|x|^{-\tau}|u|^p\nabla\xi_R
 \Big(\frac\cdot{|\cdot|^2}J_\alpha*|\cdot|^{-\tau}|u|^p\Big)\,dx\\
&:=(N)_1+(N)_2+(N)_3.
\end{align*}
For the term $(L)$, {with the properties of $\xi_R$, namely 
\eqref{prpr} and the radial identities}, it follows that
\begin{equation} \label{36461}
\begin{aligned}
(L)&= -\int_{\mathbb{R}^N}\Delta^2\xi_R|u|^2\,dx
 +4\int_{\mathbb{R}^N}|\nabla u|^2\frac{\xi_R'}r\,dx
 +4\int_{\mathbb{R}^N}|x\cdot\nabla u|^2(\frac{\xi_R''}{r^2}
 -\frac{\xi_R'}{r^3})\,dx \\
&\quad +4\lambda\int_{\mathbb{R}^N}\frac{|u|^2}{r^3}\xi_R'\,dx \\
&\leq -\int_{\mathbb{R}^N}\Delta^2\xi_R|u|^2\,dx+8\int_{\mathbb{R}^N}|\nabla u|^2\,dx
 +8\lambda\int_{\mathbb{R}^N}\frac{|u|^2}{r^2}\,dx.
\end{aligned}
\end{equation}
For the terms $(N)_1$ and $(N)_2$ in $(N)$, taking account of the 
truncation function properties and the conservation laws, one has
\[
(N)_1+(N)_2=2(\frac {4N-4\tau}p-2N)\mathcal P[u]
+O\Big(\int_{B^c(R)}|x|^{-\tau}|u|^p\big(J_\alpha*
 |\cdot|^{-\tau}|u|^p\big)\,dx\Big).
\]
For the third term $(N)_3$ in $(N)$, with the calculations done in \cite{st4}, 
one has
\begin{equation} \label{372}
\begin{aligned}
(N)_3
&= \frac{4(\alpha-N)}{p}\int_{B(R)\times B(R)}J_\alpha
 (x-y)|y|^{-\tau}|u(y)|^p|x|^{-\tau}|u(x)|^p\,dx\,dy \\
&\quad +O\Big(\int_{B^c(R)}\big(J_\alpha*|\cdot|^{-\tau}|u|^p\big)
 |x|^{-\tau}|u|^p\,dx\Big) \\
&= \frac{4(\alpha-N)}{p}\int_{B(R)}\big(J_\alpha*
 |\cdot|^{-\tau}|u|^p\big)|x|^{-\tau}|u(x)|^p\,dx \\
&\quad +O\Big(\int_{B^c(R)}\big(J_\alpha*|\cdot|^{-\tau}|u|^p\big)
 |x|^{-\tau}|u|^p\,dx\Big) \\
&= \frac{4(\alpha-N)}{p}\mathcal P[u]+O\Big(\int_{B^c(R)}
 \big(J_\alpha*|\cdot|^{-\tau}|u|^p\big)|x|^{-\tau}|u|^p\,dx\Big).
\end{aligned} 
\end{equation}
Collecting the above estimates, one obtains
\begin{equation} \label{36462}
(N)= -\frac {4B}{p}\mathcal P[u]+O\Big(\int_{B^c(R)}\big(J_\alpha*
 |\cdot|^{-\tau}|u|^p\big)|x|^{-\tau}|u|^p\,dx\Big).
\end{equation}
Hence, by \eqref{36461} and \eqref{36462}, one has
\begin{equation} \label{r2}
\begin{aligned}
M_R'
&\leq-\int_{\mathbb{R}^N}\Delta^2\xi_R|u|^2\,dx+8\int_{\mathbb{R}^N}|\nabla u|^2
 +8\lambda\int_{\mathbb{R}^N}\frac{|u|^2}{r^2}\,dx
 -\frac{4B}p\mathcal P[u] \\
&\quad +O\Big(\int_{B^c(R)}|x|^{-\tau}|u|^p\big(J_\alpha*
 |\cdot|^{-\tau}|u|^p\big)\,dx\Big) \\
&\leq 8\Big(\|\sqrt{\mathcal K_\lambda} u\|^2-\frac{B}{2p}\mathcal P[u]\Big)
 +O\Big(\int_{B^c(R)}|x|^{-\tau}|u|^p\big(J_\alpha*|\cdot|^{-\tau}|u|^p\big)
  \,dx\Big)+O(R^{-2}).
\end{aligned} 
\end{equation}
Now, with Lemma \ref{hls}, the H\"older and Gagliardo-Nirenberg 
inequalities via the mass conservation, one writes
\begin{equation} \label{1231-1}
\begin{aligned}
\int_{B^c(R)}|x|^{-\tau}|u|^p\big(J_\alpha*|\cdot|^{-\tau}|u|^p\big)\,dx 
&\lesssim \||x|^{-\tau}|u|^p\|_{\frac{2N}{\alpha+N}}\||x|^{-\tau}
 |u|^p\|_{L^\frac{2N}{\alpha+N}(B^c(R))} \\
&\lesssim R^{-\tau}\|u\|_{\frac{2Np}{\alpha+N}}^p\||x|^{-\tau}
 |u|^p\|_{\frac{2N}{\alpha+N}} \\
&\lesssim R^{-\tau}\|u_0\|^{p-\frac{N(p-1)-\alpha}2}
 \|\nabla u\|^{\frac{N(p-1)-\alpha}2}\||x|^{-\tau}
  |u|^p\|_{\frac{2N}{\alpha+N}} \\
&\lesssim R^{-\tau}\|\nabla u\|^{\frac{N(p-1)-\alpha}2}\||x|^{-\tau}
|u|^p\|_{\frac{2N}{\alpha+N}}.
\end{aligned}
\end{equation}
Now, using Proposition \ref{gag2}, with $\frac{2N\tau}{\alpha+N}$ 
instead of $2\tau$ and $\frac{2Np}{\alpha+N}$ instead of $2q$, via 
the fact that $0<\tau<1+\frac\alpha N$ and $p<p^c$, one has
\begin{equation} \label{r1}
\begin{aligned}
\|\,|x|^{-\tau}|u|^p\|_{\frac{2N}{\alpha+N}}
&=\Big(\int_{\mathbb{R}^N} |x|^{-\frac{2N\tau}{\alpha+N}}
 |u|^{\frac{2Np}{\alpha+N}}\,dx\Big)^\frac{\alpha+N}{2N} \\
&\lesssim \|u_0\|^{p-(\frac{N(p-1)-\alpha}2+\tau)}
 \|\sqrt{\mathcal K_\lambda} u\|^{\frac{N(p-1)-\alpha}2+\tau}.
\end{aligned}
\end{equation}
Thus, recall that $B=Np-N-\alpha+2\tau$, by \eqref{r2} via \eqref{r1} 
and \eqref{norm}, one obtains for large $R{\gg}1$,
\begin{equation}
M_R'
\leq C\mathcal I[u]+\frac C{R^{{\tau}}}\|\sqrt{\mathcal 
 K_\lambda} u\|^{B-{\tau}}+\frac C{R^2}.\label{4.241}
\end{equation}
Since $\mathcal I[u]<0$, by Gagliardo-Nirenberg estimate in Proposition 
\ref{gag}, via the mass conservation law, one has
\begin{equation} \label{r3}
\|\sqrt{\mathcal K}_\lambda u\|^2
\lesssim\mathcal P[u] 
\lesssim\|\sqrt{\mathcal K_\lambda} u\|^{B}\|u\|^{2p-B} 
\lesssim\|\sqrt{\mathcal K_\lambda} u\|^{B}.
\end{equation}
Thus, by $B>2$, there exists $C_0>0$ such that for any $t\in[0, T^*)$,
\begin{equation} \label{371}
\|\sqrt{\mathcal K_\lambda} u(t)\|\geq C_0.
\end{equation}
Hence, \eqref{nw} and \eqref{4.241}-\eqref{4.251} give for $2<B\leq2+\tau$ 
and $R \gg 1$,
\begin{equation} \label{4.261}
\begin{aligned}
M'_R[u]
&\lesssim \mathcal I[u]+R^{-2}+R^{-\tau}\|\sqrt{\mathcal K_\lambda} 
 u\|^{B-\tau} \\
&\lesssim -\|\sqrt{\mathcal K_\lambda} u\|^2+R^{-2}+R^{-\tau}
 \|\sqrt{\mathcal K_\lambda} u\|^{B-\tau} \\
&\lesssim \|\sqrt{\mathcal K_\lambda} u\|^2\Big(-1+R^{-2}
 +R^{-\tau}\|\sqrt{\mathcal K_\lambda} u\|^{B-2-\tau}\Big) \\
&\lesssim -\|\sqrt{\mathcal K_\lambda} u\|^2.
\end{aligned}
\end{equation}
Time integration, \eqref{371}, and \eqref{4.261} imply that 
\begin{equation}\label{4.281}
M_R[u(t)]\lesssim-t,\quad t>T>0.   
\end{equation}
By time integration again, from \eqref{4.261}, it follows that
\begin{equation} 
M_R[u(t)]
\lesssim -\int_T^t\|\sqrt{\mathcal K_\lambda} u(s)\|^2\,ds\label{4.271}.
\end{equation}
Now, the definition \eqref{mrwtz} via \eqref{norm} gives
\begin{equation}
|M_R|
= 2|\Im\int_{\mathbb{R}^N} \bar u(\nabla\xi_R\cdot\nabla u)\,dx| 
\lesssim R\|\nabla u\|\|u\| 
\lesssim R\|\sqrt{\mathcal K_\lambda} u\| \label{4.28}.
\end{equation}
Thus, by \eqref{4.281}, \eqref{4.271} and \eqref{4.28}, it follows that
\begin{equation}\label{4.29}
\int_T^t\|\sqrt{\mathcal K_\lambda} u(s)\|^2\,ds
\lesssim |M_R[u(t)]|
\lesssim R\|\sqrt{\mathcal K_\lambda} u(t)\|,\quad \forall t>T.
\end{equation}
Take the real function $f(t):=\int_T^t\|\sqrt{\mathcal K_\lambda} u(s)\|^2$. 
By \eqref{4.29}, one obtains $f^2\lesssim f'$. This ODI has no global solution. 
Indeed, for $T'>T>t$, an integration gives
$$
t-T'\lesssim\int_{T'}^t\frac{f'(s)}{f^2(s)}\,ds
=\frac1{f(T')}-\frac1{f(t)}\leq\frac1{f(T')}.
$$
This implies $T'+\frac c{f(T')}$. This completes the proof.

\subsubsection*{Second case}
Assume that \eqref{t13} holds. Taking account of the previous sub-section, 
it is sufficient to prove the next result. 

\begin{lemma}\label{ky1}
There exist $C>0$ and $\varepsilon>0$, such that for any $t\in [0, T^*)$, 
the following statements hold:
$$
 \mathcal{I}[u(t)]<-C<0,
$$
and
\begin{align}
 \mathcal I[u(t)]+\varepsilon \|\sqrt{\mathcal K_\lambda}u(t)\|^2<0.\label{nw1}
\end{align}
\end{lemma}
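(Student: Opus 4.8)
The plan is to derive both statements in Lemma~\ref{ky1} from the scale-invariant hypothesis \eqref{t13} by transferring it to an equivalent coercivity statement on the functional $\mathcal{I}$. The key observation is that under the assumption $\mathcal{MG}[u_0]>1>\mathcal{ME}[u_0]$, the Pohozaev identities \eqref{poh} for the ground state $\varphi$ allow one to rewrite the quotients $\mathcal{ME}$ and $\mathcal{MG}$ in terms of $\|\sqrt{\mathcal K_\lambda}u\|$, $\|u\|$, and $\mathcal P[u]$. First I would use the sharp Gagliardo-Nirenberg inequality \eqref{ineq} together with the explicit value of the sharp constant \eqref{part3} to bound $\mathcal P[u]$ in terms of $\|\sqrt{\mathcal K_\lambda}u\|^B\|u\|^A$; substituting into the definitions of $\mathcal E[u]$ and $\mathcal I[u]$ turns the energy and the functional $\mathcal I$ into functions of the single scale-invariant quantity $\mathcal{MG}[u(t)]$, modulo the conserved mass and energy.

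Second, I would show that the condition $1>\mathcal{ME}[u_0]$ together with $\mathcal{MG}[u_0]>1$ propagates in time. Since $\mathcal M$ and $\mathcal E$ are conserved, $\mathcal{ME}[u(t)]=\mathcal{ME}[u_0]<1$ for all $t\in[0,T^*)$. The quantity $\mathcal{MG}[u(t)]$ varies continuously in $t$; the heart of the argument is that the region $\{\mathcal{MG}>1\}$ cannot be exited without passing through the ground-state threshold $\mathcal{MG}=1$, which would force $\mathcal{ME}\geq1$, contradicting the strict inequality. This is the standard Holmer-Roudenko type trapping dichotomy, and it gives a quantitative lower bound $\mathcal{MG}[u(t)]\geq 1+\delta$ for some $\delta>0$ uniform in $t$.

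Third, with $\mathcal{MG}[u(t)]\geq 1+\delta$ in hand, I would feed this back into the rewritten expression for $\mathcal I[u(t)]$. The gap between $\mathcal{MG}[u(t)]$ and $1$, combined with the lower bound $\|\sqrt{\mathcal K_\lambda}u(t)\|\geq C_0$ already established in \eqref{371}, yields a strictly negative upper bound $\mathcal{I}[u(t)]<-C<0$. The coercivity estimate \eqref{nw1} then follows by the same convexity manipulation used in \eqref{4.251}: one writes $\|\sqrt{\mathcal K_\lambda}u\|^2$ in terms of $\mathcal S[u]$, $\mathcal I[u]$, and $\|u\|^2$ (using $B>2$), and chooses $\varepsilon>0$ small enough that the positive multiple of $\|\sqrt{\mathcal K_\lambda}u\|^2$ is absorbed by the strictly negative $\mathcal I[u]$.

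The main obstacle I anticipate is making the trapping argument fully rigorous, namely verifying that the function $t\mapsto\mathcal{MG}[u(t)]$ genuinely cannot cross the threshold value $1$. This requires checking that at the threshold $\mathcal{MG}=1$ the constraint $\mathcal{ME}=1$ is forced by equality in the sharp Gagliardo-Nirenberg inequality \eqref{ineq}, which in turn characterizes the ground state via \eqref{part3}. One must also confirm that the inter-critical restriction $p_c<p<p^c$ (equivalently $0<s_c<s$, so that the exponent $\alpha_c$ is well defined and lies in $(0,1)$) guarantees that the relevant function of $\mathcal{MG}$ has the right monotonicity to produce a spectral gap; this is exactly where the hypothesis $\mathcal{MG}[u_0]>1$ (as opposed to $<1$) selects the blow-up side of the dichotomy rather than the scattering side.
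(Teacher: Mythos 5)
Your proposal is correct, and its core coincides with the paper's argument: the propagation of $\mathcal{MG}[u(t)]>1$ is obtained in the paper exactly as you describe, by combining the sharp Gagliardo--Nirenberg bound \eqref{ineq} (with the sharp constant \eqref{part3}) with the Pohozaev identities \eqref{poh} to get $F(\|\sqrt{\mathcal K_\lambda}u(t)\|^2)\leq\mathcal E[u_0]<F(x_1)$ for $F(y)=y-\mathcal C y^{B/2}$, and then invoking continuity in time to forbid crossing the maximizer $x_1$; negativity of $\mathcal I$ then follows from the identity $2\mathcal I[u]=B\mathcal E[u]-(B-2)\|\sqrt{\mathcal K_\lambda}u\|^2$ together with the conserved quantities. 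Where you genuinely differ is the second assertion \eqref{nw1}: you conclude directly by writing $\mathcal I[u]+\varepsilon\|\sqrt{\mathcal K_\lambda}u\|^2=\big(1-\frac{2\varepsilon}{B-2}\big)\mathcal I[u]+\frac{\varepsilon B}{B-2}\mathcal E[u_0]$ and choosing $\varepsilon$ so small that the conserved-energy term is dominated by the uniform bound $\mathcal I[u(t)]<-C$, whereas the paper argues by contradiction, extracting times $t_n$ with $\varepsilon_n\to0$ and showing in \eqref{rv6}--\eqref{rv9} that this would force $\mathcal{ME}[u_0]\geq1$, against \eqref{t13}. Your absorption argument is more direct and quantitative; the paper's sequence argument avoids using the explicit constant $C$. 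Two small caveats: (i) your appeal to \eqref{371} is circular as written, since that bound is derived in the first case from the hypothesis $\mathcal I[u]<0$, which is precisely what you are proving here --- but it is also unnecessary, because the trapping bound $\mathcal{MG}[u(t)]\geq 1+\delta$ already yields the kinetic lower bound through $\|\sqrt{\mathcal K_\lambda}\varphi\|$ and the conserved masses; (ii) the crossing argument needs neither equality in \eqref{ineq} nor a characterization of its optimizers, only that $F$ attains its maximum at $x_1$ and that $\mathcal{ME}[u_0]<1$ is equivalent, via \eqref{poh} and \eqref{part3}, to $\mathcal E[u_0]<F(x_1)$.
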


\begin{proof}
(1)
Define the quantity
$\mathcal C:=\frac{C_{N,p,\tau,\lambda}}{p}\| u\|^{A}$.
Then, by Proposition \ref{gag}, one writes
\begin{equation}\label{xxxx1}
F(\|\sqrt{\mathcal K_\lambda} u(t)\|^2)
:=\|\sqrt{\mathcal K_\lambda} u(t)\|^2 
- \mathcal C\|\sqrt{\mathcal K_\lambda} u(t)\|^{{B}}\leq \mathcal E[u_0],
 \quad \text{on}\quad[0,T^*).
\end{equation}
Now, since $B>2,$ the above real function has a maximum 
$$
F(x_1):=F\Big[\Big( \frac{2}{\mathcal CB}\Big)^{\frac{2}{B-2}}\Big]
=\Big( \frac{2}{\mathcal CB}\Big)^{\frac{2}{B-2}}\Big( 1-\frac{2}{B}\Big) .
$$
Moreover, thanks to Pohozaev identities \eqref{poh} and the condition 
\eqref{t13}, it follows that
\begin{equation}\label{x1}
\mathcal E[u_0]< \frac{B-2}{A}\big(\mathcal M[u_0]\big)^{-\alpha_c}
\big(\mathcal M[{\varphi}]\big)^{\frac{1}{s_c}}.
\end{equation}
In addition, by \eqref{part3}, one obtains
\begin{equation} \label{xx1}
\begin{aligned}
F(x_1)
&= \Big(\frac2{B\mathcal C}\Big)^\frac2{B-2}\Big(1-\frac2{B}\Big) \\
&= \Big((\frac{A}{B})^{1-\frac{B}2}(\mathcal M[{\varphi}])^{p-1}
 (\mathcal M[u_0])^{-\frac{A}2}\Big)^\frac2{B-2}\Big(1-\frac2{B}\Big) \\
&= \frac{B-2}{A}\Big((\mathcal M[u_0])^{-\frac{A}2}
 (\mathcal M[{\varphi}])^{p-1}\Big)^\frac2{B-2} \\
&=  \frac{B-2}{A}\big(\mathcal M[u_0]\big)^{-\alpha_c}
 \big( \mathcal M[{\varphi}]\big)^{\frac{1}{s_c}}.
\end{aligned}
\end{equation}
Relations \eqref{x1} and \eqref{xx1} imply that
$\mathcal E[u_0]<F(x_1)$.
By the previous inequality and \eqref{xxxx1}, one has
\begin{equation}\label{ss31}
 F\Big(\|\sqrt{\mathcal K_\lambda} u(t)\|^2\Big)
 \leq \mathcal E[u_0]<F(x_1). 
\end{equation}
Direct calculations show that
$$
x_1=\frac{B}{A}\big(\mathcal M[{\varphi}]
\big)^{\frac{1}{s_c}}\big(\mathcal M[u_0]\big)^{-\alpha_c},
$$
Now, via \eqref{poh}, the inequality \eqref{t13} reads 
$$
\|\sqrt{\mathcal K_\lambda} u_0\|^2
>\frac{B}{A}\mathcal M[{\varphi}]
\Big(\frac{\mathcal M[{\varphi}]}{\mathcal M[u_0]}\Big)^{\alpha_c}=x_1.
$$
Thus, the continuity in time with \eqref{ss31} gives
$$ 
\|\sqrt{\mathcal K_\lambda} u (t)\|^2>x_1,\quad \forall\, t\in [0,T^*) .
$$
Then, by \eqref{poh}, it follows that
\begin{equation}
\mathcal{MG}[u(t)]>1, \quad\text{on } [0,T^*) .\label{stbb}
\end{equation}
Thus, by the Pohozaev identity 
$B\mathcal E[{\varphi}]=(B-2)\|\sqrt{\mathcal K_\lambda}{\varphi}\|^2$, 
it follows that
\begin{align*}
\mathcal I[u][\mathcal M[u]]^{\alpha_c}
&= \Big(\|\sqrt{\mathcal K_\lambda} u\|^2-\frac{B}{2q}\mathcal P[u]\Big)[\mathcal M[u]]^{\alpha_c}\\
&= \frac{B}{2}\mathcal E[u][\mathcal M[u]]^{\alpha_c}-(\frac{B}{2}-1)\|\sqrt{\mathcal K_\lambda} u\|^2[\mathcal M[u]]^{\alpha_c}\\
&\leq \frac{B}{2}(1-\nu)\mathcal E[{\varphi}][\mathcal M[{\varphi}]]^{\alpha_c}-(\frac{B}{2}-1)\|\sqrt{\mathcal K_\lambda}{\varphi}\|^2[\mathcal M[{\varphi}]]^{\alpha_c}\\
&\leq -\nu(\frac{B}{2}-1)\|\sqrt{\mathcal K_\lambda}{\varphi}\|^2[\mathcal M[\varphi]]^{\alpha_c}.
\end{align*}
The proof of the first point is complete.

(2) Assume that \eqref{nw1} fails, then there exists a time sequence
 $\{t_n\}\subset [0, T^*)$ such that
\begin{equation} \label{rv6}
   -\varepsilon_n\big(\frac B2-1\big)\|\sqrt{\mathcal{K}_\lambda}u(t_n)\|^2
< \mathcal{I}[u(t_n)]<0,
\end{equation}
where $\varepsilon_n\to0$ and $n\to \infty$. Moreover, note that
$$
2 \mathcal{I}[u(t_n)]=B \mathcal E[u(t_n)]-(B-2)
\|\sqrt{\mathcal{K}_\lambda}u(t_n)\|^2.
$$
Hence, \eqref{rv6} implies that
\begin{equation}
 (1-\varepsilon_n)\big(1-\frac2B\big)\|\sqrt{\mathcal{K}_\lambda}u(t_n)\|^2
 <\mathcal E[u_0].\label{rv7}
\end{equation}
Hence, by \eqref{poh}, \eqref{t13}, \eqref{stbb} and \eqref{rv7},
 we obtain
\begin{equation} \label{rv8}
\begin{aligned}
\mathcal E[u_0]\mathcal M[u_0]^{\alpha_c}
 &>(1-\varepsilon_n)\big(1-\frac2B\big)
  \|\sqrt{\mathcal{K}_\lambda}u(t_n)\|^2\mathcal M[u_0]^{\alpha_c} \\
 &>(1-\varepsilon_n)\big(1-\frac2B\big)
  \|\sqrt{\mathcal{K}_\lambda}\varphi\|^2\mathcal M[\varphi]^{\alpha_c} \\
 &>(1-\varepsilon_n)\mathcal E[\varphi]
  \mathcal M[\varphi]^{\alpha_c}.
\end{aligned}
\end{equation}
Taking $n\to\infty$ in \eqref{rv8}, yields 
\begin{equation} \label{rv9}
\mathcal E[u_0]\mathcal M[u_0]^{\alpha_c}
 \geq\mathcal E[\varphi]\mathcal M[\varphi]^{\alpha_c}. 
\end{equation}
The proof of the second statement \eqref{nw1} is achieved by the 
contradiction of \eqref{rv9} with $\mathcal{ME}[u_0]<1$ in \eqref{t13}. 
Hence, this lemma is established.
\end{proof}

\subsection{Bi-harmonic case}
In this sub-section, one assumes that $s=2$. 

\subsubsection*{First case}
Assume that \eqref{ss} holds. We start with the next auxiliary result.

\begin{lemma}\label{2ky}
\begin{enumerate}
\item The set $\mathcal{A}^-$ is stable under the flow of \eqref{S}.
\item There exists $\varepsilon>0$, such that for any $t\in [0, T^*)$
 \begin{align}
 \mathcal I[u(t)]+\varepsilon \|{\Delta}u(t)\|^2&\leq-\frac B4\big(m-\mathcal{S}[u(t)]\big).\label{2nw}
 \end{align}
 \end{enumerate}
\end{lemma}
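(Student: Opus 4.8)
The plan is to mirror the $s=1$ argument of Lemma \ref{ky}, replacing the quadratic-form norm $\|\sqrt{\mathcal K_\lambda}\cdot\|$ by $\|\Delta\cdot\|$ (which equals $\|\sqrt{\mathcal K_{2,\lambda}}\cdot\|$ since $\mathcal K_{2,\lambda}=\Delta^2$) and choosing the dilation adapted to the fourth-order operator. For the stability assertion (1), I would argue exactly as before: since $\mathcal M$ and $\mathcal E$ are conserved along the flow of \eqref{S}, one has $\mathcal S[u(t)]=\mathcal S[u_0]<m$ for all $t\in[0,T^*)$. If $u_0\in\mathcal A^-$ but $u(t_0)\notin\mathcal A^-$ for some $t_0$, then $\mathcal I[u(t_0)]\geq0$; the continuity of $t\mapsto\mathcal I[u(t)]$ (from $u\in C_{T^*}(H^2)$) together with the intermediate value theorem produces a time $t_1$ with $\mathcal I[u(t_1)]=0$ and $\mathcal S[u(t_1)]<m$, contradicting the definition \eqref{m} of $m$.

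For part (2), the key device is the $L^2$-invariant dilation tuned to $\Delta^2$, namely $u_\rho:=\rho^{N/4}u(\rho^{1/2}\cdot)$ for $\rho>0$. A direct change of variables should give the three scaling identities $\|u_\rho\|=\|u\|$, $\|\Delta u_\rho\|=\rho\|\Delta u\|$, and $\mathcal P[u_\rho]=\rho^{B}\mathcal P[u]$; the last one requires tracking the two weights $|x|^{-\tau}$ and the Riesz kernel $J_\alpha$ under the dilation and using $2B=Np-N-\alpha+2\tau$ (the definition of $B$ for $s=2$). With these, the function $\digamma(\rho):=\mathcal S[u_\rho]=\rho^2\|\Delta u\|^2+\|u\|^2-\frac{\rho^B}{p}\mathcal P[u]$ takes exactly the same form as in the $s=1$ proof, so that $\rho\digamma'(\rho)=2\mathcal I[u_\rho]$ and, using $B>2$, one gets $(\rho\digamma'(\rho))'=B\digamma'(\rho)-2(B-2)\rho\|\Delta u\|^2\leq B\digamma'(\rho)$.

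From here the reasoning should be verbatim that of Lemma \ref{ky}(2). Writing $\mathcal I[u_\rho]=\rho^2(\|\Delta u\|^2-\frac{\rho^{B-2}}{2p}\mathcal P[u]):=\rho^2\aleph(\rho)$ and noting $\aleph(0)>0$ while $\aleph(1)=\mathcal I[u]<0$, there is $\rho_0\in(0,1)$ with $\mathcal I[u_{\rho_0}]=0$, whence $\digamma'(\rho_0)=0$ and $\digamma(\rho_0)=\mathcal S[u_{\rho_0}]\geq m$. Integrating the differential inequality over $[\rho_0,1]$ and inserting $\digamma'(1)=2\mathcal I[u]$, $\rho_0\digamma'(\rho_0)=0$, $\digamma(1)=\mathcal S[u]$ yields $\mathcal I[u]\leq-\frac B2(m-\mathcal S[u])$. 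Finally, using the algebraic identity $\|\Delta u\|^2=\frac{B}{B-2}(\mathcal S[u]-\frac2B\mathcal I[u]-\|u\|^2)$ and choosing $0<\varepsilon\ll1$ (with $m-\mathcal S[u]$ a fixed positive constant by conservation) gives the claimed estimate \eqref{2nw}, uniformly in $t$.

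The only genuinely new computation, and the step I expect to be the main obstacle, is verifying $\mathcal P[u_\rho]=\rho^B\mathcal P[u]$: one must check that the non-local weighted term scales with precisely the exponent $B$ (defined for $s=2$) under the $\rho^{1/2}$-dilation, which requires combining the homogeneity of $J_\alpha$ (degree $\alpha-N$) with the two singular weights $|x|^{-\tau}$ and then invoking $2B=Np-N-\alpha+2\tau$. Everything else transfers mechanically from the $s=1$ case once the correct fourth-order dilation $u_\rho=\rho^{N/4}u(\rho^{1/2}\cdot)$ is fixed, since it was designed precisely so that $\|\Delta u_\rho\|$ scales like $\rho$ and $\mathcal P[u_\rho]$ like $\rho^{B}$, reproducing the $s=1$ algebra identically.
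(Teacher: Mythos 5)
Your proposal is correct and follows essentially the same potential-well argument as the paper: the paper's proof of this lemma uses the dilation $u_\rho=\rho^{N/2}u(\rho\cdot)$, under which $\|\Delta u_\rho\|=\rho^2\|\Delta u\|$ and $\mathcal P[u_\rho]=\rho^{2B}\mathcal P[u]$, and carries the exponents $\rho^4,\rho^{2B}$ through the same derivative/integration scheme, whereas your choice $u_\rho=\rho^{N/4}u(\rho^{1/2}\cdot)$ is just the reparametrization $\rho\mapsto\rho^{1/2}$ of that scaling, which makes the algebra literally identical to the $s=1$ case. Your key scaling check $\mathcal P[u_\rho]=\rho^{B}\mathcal P[u]$ is indeed valid (it follows from the homogeneity of $J_\alpha$ and the weights together with $2B=Np-N-\alpha+2\tau$), so everything goes through as you describe.
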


\begin{proof}
(1) The proof follows a similar approach to the first point in Lemma 
\ref{ky}.

(2) Now, taking the scaling $u_\rho:=\rho^{\frac N2}u(\rho\cdot)$ for 
$\rho>0$, we compute 
\begin{gather}
\|u_\rho\|=\|u\|;\label{2scl1}\\
\|{\Delta}u_\rho\|= \rho^2\|{\Delta}u\|;\label{2scl2}\\
\mathcal P[u_\rho]= \rho^{2B}\mathcal P[u]\label{2scl4}.
\end{gather}
Moreover, taking the real function 
${\Upsilon}:\rho\mapsto \mathcal{S}[u_\rho]$, 
we obtain ${\Upsilon}(\rho)=\rho^4\|{\Delta}u\|^2+\|u\|^2
-\frac{\rho^{2B}}{p}\mathcal P[u]$ and the first derivative reads
\begin{equation}
 {\Upsilon}'(\rho)=4\rho^3\|{\Delta}u\|^2-2B\frac{\rho^{2B-1}}{p}
 \mathcal P[u]=4\rho^{-1}\mathcal I[u_\rho].\label{2rv1}
\end{equation} 
This implies
\begin{equation}
 \rho{\Upsilon}'(\rho)=4\rho^4\|{\Delta}u\|^2-2B\frac{\rho^{2B}}{p}\mathcal P[u]=4\mathcal I[u_\rho].\label{2rv2}
\end{equation}
Moreover, since $B>2$, we obtain
\begin{equation} \label{2rv3}
\begin{aligned}
 \big(\rho{\Upsilon}'(\rho)\big)'
&=16\rho^3\|{\Delta}u\|^2-4B^2\frac{\rho^{2B-1}}{p}\mathcal P[u] \\
&=2B{\Upsilon}'(\rho)-8(B-2)\rho^3\|{\Delta}u\|^2 \\
&\leq 2B{\Upsilon}'(\rho).
\end{aligned}
\end{equation}
Now, we claim that there exists $\rho_0\in(0,1)$ such that
\begin{equation}
   \mathcal I[u_{\rho_0}]=0.\label{2claim}
\end{equation}
Indeed, by \eqref{2scl2} and \eqref{2scl4}, we have
\begin{equation}
 \mathcal I[u_{\rho}]
 =\rho^4\Big(\|{\Delta}u\|^2-\frac{\rho^{2(B-2)}}{2p}\mathcal P[u]\Big) 
 :=\rho^4\Xi(\rho).\label{2rv4}
\end{equation}
Note that $\Xi(0)>0$ and $\Xi(1)=\mathcal I[u]<0$. Then there exists 
$\rho_0\in(0,1)$ such that $\Xi(\rho_0)=0$, the claim is proved. 
Hence, by \eqref{2rv1}, we have 
${\Upsilon}'(\rho_0)=0$ and ${\Upsilon}(\rho_0)=\mathcal S[u_{\rho_0}]\geq m$. 
Hence, an integration of \eqref{2rv3} on $[\rho_0,1]$ gives
$$
\Upsilon'(1)-\rho_0{\Upsilon}'(\rho_0)
\leq 2B {\Upsilon}(1)-2B {\Upsilon}(\rho_0).
$$
Note that $\Upsilon'(1)=4\mathcal I[u]$, 
$\rho_0{\Upsilon}'(\rho_0)=4\mathcal I[u(\rho_0)]=0$, and 
${\Upsilon}(1)=\mathcal{S}[u]$, the above inequality further implies 
\begin{equation}
 \mathcal{I}[u] \leq-\frac B2\big(m-\mathcal{S}[u]\big).\label{2rv5}
\end{equation}
On the other hand, we write
\begin{equation}
 \|{\Delta}u\|^2
 =\frac{B}{B-2}\big(\mathcal{S}[u]-\frac2B\mathcal{I}[u]-\|u\|^2\big).
\end{equation}
Hence, by \eqref{2rv5}, we have that there exists $0<\varepsilon\ll1$, 
such that
\begin{equation} \label{24.251}
\begin{aligned}
\mathcal{I}[u]+\varepsilon\|{\Delta}u\|^2
 &=\Big(1-\frac{2\varepsilon}{B-2}\Big)\mathcal{I}[u]
  +\varepsilon\frac{B}{B-2} \big(\mathcal{S}[u]-\|u\|^2\big) \\
 &\leq-\frac B2\Big(1-\frac{2\varepsilon}{B-2}\Big)
  \big(m-\mathcal{S}[u]\big)+\varepsilon\frac{B}{B-2} \mathcal{S}[u] \\
 &\leq-\frac B4\big(m-\mathcal{S}[u]\big).
\end{aligned}
\end{equation}
The last statement of Lemma \ref{2ky} is proved by \eqref{24.251}.
\end{proof}

Now we turn to the proof of the main results. 
Using the estimate $\|\nabla^\gamma \xi_R\|_\infty\lesssim R^{2-|\gamma|}$, 
one has
\begin{gather}
|\int_{\mathbb{R}^N}\Delta^2\xi_R|\nabla u|^2\,dx|
 +|\int_{\mathbb{R}^N}\partial_{jk}\Delta \xi_R\partial_ju\partial_k\bar u\,dx|
 \lesssim R^{-2}\|\nabla u\|^2;\label{mr1}\\
\big|\int_{\mathbb{R}^N}(\Delta^3\xi_R)|u|^2\,dx\big|
\lesssim R^{-4}.\label{mr2}
\end{gather}
Using estimates \eqref{mr1} and \eqref{mr2} via Morawetz identity 
\eqref{mrw1}, one obtains
\begin{equation} \label{mr3-1}
\begin{aligned}
M_R'
&= \frac4{p}\int_{\mathbb{R}^N}\partial_k\xi_R\partial_k
 \Big[\big(J_\alpha*|\cdot|^{-\tau}|u|^p\big)|x|^{-\tau}\Big]|u|^p\,dx
  +O(R^{-4})+\|\nabla u\|^2O(R^{-2}) \\
&\quad -4{N}(1-\frac2p)\int_{B(R)}\big(J_\alpha*|
 \cdot|^{-\tau}|u|^p\big)|x|^{-\tau}|u|^p\,dx \\
&\quad -2\Big((1-\frac2p)\int_{B^c(R)}\Delta \xi_R
 \big(J_\alpha*|\cdot|^{-\tau}|u|^p\big)|x|^{-\tau}|u|^p\,dx
 -4\int_{\mathbb{R}^N}\partial_{jk}\xi_R\partial_{ik}u\partial_{ij}\bar u\,dx\Big). 
\end{aligned}
\end{equation}
Denoting the partial derivative $\frac\partial{\partial_{xi}}u:=u_i$, 
one obtains via \eqref{symm},
\begin{equation} \label{mr3}
\begin{aligned}
\int_{\mathbb{R}^N}\partial_{jk}\xi_R\partial_{ik}u\partial_{ij}\bar u\,dx 
&= \int_{\mathbb{R}^N}\Big[\Big(\frac{\delta_{jk}}{|x|}-\frac{x_jx_k}{{|x|}^3}\Big)
 \partial_r\xi_R+\frac{x_jx_k}{{|x|}^2}\partial_r^2\xi_R\Big]
 \partial_{ik}u\partial_{ij}\bar u\,dx \\
&= \sum_{i=1}^N\int_{\mathbb{R}^N}|\nabla u_i|^2\frac{\partial_r\xi_R}{|x|}\,dx
 +\sum_{i=1}^N\int_{\mathbb{R}^N}\frac{|x\cdot\nabla u_i|^2}{|x|^2}
 \Big(\partial_r^2\xi_R-\frac{\partial_r\xi_R}{|x|}\Big)\,dx.
\end{aligned}
\end{equation}
From \eqref{mr3-1} and \eqref{mr3}, via the equality
 $\sum_{i=1}^N\|\nabla u_i\|^2=\|\Delta u\|^2$, it follows that
\begin{align*}
M_R'
&= \frac4{p}\int_{\mathbb{R}^N}\partial_k\xi_R\partial_k
 \Big[\big(J_\alpha*|\cdot|^{-\tau}|u|^p\big)|x|^{-\tau}\Big]|u|^p\,dx
  +O(R^{-4})+\|\nabla u\|^2O(R^{-2})\\
&\quad +16\|\Delta u\|^2-4{N}(1-\frac2p)\int_{B(R)}\big(J_\alpha*|
 \cdot|^{-\tau}|u|^p\big)|x|^{-\tau}|u|^p\,dx\\
&\quad -2(1-\frac2p)\int_{B^c(R)}\Delta \xi_R
 \big(J_\alpha*|\cdot|^{-\tau}|u|^p\big)|x|^{-\tau}|u|^p\,dx\\
&\quad +8\Big(\sum_{i=1}^N\int_{\mathbb{R}^N}|\nabla u_i|^2
 \big(\frac{\partial_r\xi_R}{|x|}-2\big)\,dx+\sum_{i=1}^N
 \int_{\mathbb{R}^N}\frac{|x\cdot\nabla u_i|^2}{|x|^2}\big(\partial_r^2\xi_R
 -\frac{\partial_r\xi_R}{|x|}\big)\,dx\Big).
\end{align*}
Then, \eqref{prpr} gives
\begin{equation} \label{5.48}
\begin{aligned}
M_R'
&\leq \frac4{p}\int_{\mathbb{R}^N}\partial_k\xi_R\partial_k
 \Big[\big(J_\alpha*|\cdot|^{-\tau}|u|^p\big)|x|^{-\tau}
 \Big]|u|^p\,dx+cR^{-2}(R^{-2}+\|\nabla u\|^2) \\
&\quad +16\|\Delta u\|^2-4{N}(1-\frac2p)\int_{B(R)}
 \big(J_\alpha*|\cdot|^{-\tau}|u|^p\big)|x|^{-\tau}|u|^p\,dx \\
&\quad -2(1-\frac2p)\int_{B^c(R)}\Delta \xi_R\big(J_\alpha*|
 \cdot|^{-\tau}|u|^p\big)|x|^{-\tau}|u|^p\,dx.
\end{aligned}
\end{equation}
Take the quantity 
\begin{align*}
(A)
&:= \int_{\mathbb{R}^N}\partial_k\xi_R\partial_k
\Big[\big(J_\alpha*|\cdot|^{-\tau}|u|^p\big)|x|^{-\tau}\Big]|u|^p\,dx\\
&= (\alpha-N)\int_{\mathbb{R}^N}\nabla \xi_R\Big(\frac\cdot{|x|^2}J_\alpha*|
 \cdot|^{-\tau}|u|^p\Big)|x|^{-\tau}|u|^p\,dx\\
&\quad -\tau\int_{\mathbb{R}^N}\frac{\nabla \xi_R\cdot x}{|x|^2}
 \big(J_\alpha*|\cdot|^{-\tau}|u|^p\big)|x|^{-\tau}|u|^p\,dx\\
&:=(\alpha-N)\cdot(I)-\tau\cdot(II).
\end{align*}
In the same way as \eqref{372}, one has
\begin{align*}
(I)
&= \int_{B(R)\times B(R)}J_\alpha(x-y)|y|^{-\tau}|u(y)|^p
 |x|^{-\tau}|u(x)|^p\,dx\,dy\\
&\quad +O\Big(\int_{B^c(R)}\big(J_\alpha*|\cdot|^{-\tau}|u|^p\big)
 |x|^{-\tau}|u|^p\,dx\Big)\\
&= \int_{B(R)}\big(J_\alpha*|\cdot|^{-\tau}|u|^p\big)|x|^{-\tau}
 |u(x)|^p\,dx+O\Big(\int_{B^c(R)}\big(J_\alpha*|\cdot|^{-\tau}
 |u|^p\big)|x|^{-\tau}|u|^p\,dx\Big).
\end{align*}
From the properties of $\xi_R$, one writes
\[
(II)
= 2\int_{B(R)}\big(J_\alpha*|\cdot|^{-\tau}|u|^p\big)
|x|^{-\tau}|u|^p\,dx+O\Big(\int_{B^c(R)}\big(J_\alpha*
|\cdot|^{-\tau}|u|^p\big)|x|^{-\tau}|u|^p\,dx\Big).
\]
Thus, 
\begin{align*}
(A)
&= 2(-\tau -\frac{N-\alpha}2)\int_{B(R)}\big(J_\alpha*
 |\cdot|^{-\tau}|u|^p\big)|x|^{-\tau}|u(x)|^p\,dx\\
&\quad +O\Big(\int_{B^c(R)}\big(J_\alpha*|\cdot|^{-\tau}|u|^p\big)
 |x|^{-\tau}|u|^p\,dx\Big).
\end{align*}
Further, \eqref{5.48} implies that
\begin{equation} \label{3.40}
\begin{aligned}
M_R'
&\leq 2\Big(8\int_{\mathbb{R}^N}|\Delta u|^2\,dx-2{N}(1-\frac2p)
 \int_{\mathbb{R}^N}\big(J_\alpha*|\cdot|^{-\tau}|u|^p\big)|x|^{-\tau}|u|^p\,dx\Big) \\
&\quad +\frac4{p}(A)+cR^{-2}(R^{-2}+\|\nabla u\|^2)+O\Big(\int_{B^c(R)}
 \big(J_\alpha*|\cdot|^{-\tau}|u|^p\big)|x|^{-\tau}|u|^p\,dx\Big) \\
&= 2\Big(8\int_{\mathbb{R}^N}|\Delta u|^2\,dx-2{N}(1-\frac2p)\int_{\mathbb{R}^N}
 \big(J_\alpha*|\cdot|^{-\tau}|u|^p\big)|x|^{-\tau}|u|^p\,dx\Big) \\
&\quad +\frac8{p}(-\tau -\frac{N-\alpha}2)\int_{\mathbb{R}^N}
 \big(J_\alpha*|\cdot|^{-\tau}|u|^p\big)|x|^{-\tau}|u(x)|^p\,dx \\
&\quad +O(R^{-2})+O\Big(\int_{B^c(R)}\big(J_\alpha*|\cdot|^{-\tau}
 |u|^p\big)|x|^{-\tau}|u|^p\,dx\Big) \\
&= 16\mathcal I[u]+cR^{-2}(R^{-2}+\|\nabla u\|^2)+O\Big(\int_{B^c(R)}
 \big(J_\alpha*|\cdot|^{-\tau}|u|^p\big)|x|^{-\tau}|u|^p\,dx\Big).
\end{aligned}
\end{equation}
Since $0<\tau<s\big(1+\frac\alpha N\big)$, using the Gagliardo-Nirenberg 
estimate in Proposition \ref{gag2} via the mass conservation, one writes 
\begin{equation} \label{r5}
\begin{aligned}
\||x|^{-\tau}u^p\|_{L^{\frac{2N}{\alpha+N}}}
&=\Big(\int_{\mathbb{R}^N}|x|^{-\frac{2N\tau}{\alpha+N}}|u|^{\frac{2Np}{\alpha+N}}
 \,dx\Big)^{\frac{\alpha+N}{2N}} \\
&\lesssim\|u\|^{p-(\frac{Np-N-\alpha+2\tau}4)}
 \|\Delta u\|^{\frac{Np-N-\alpha+2\tau}4} \\
&\lesssim\|\Delta u\|^{\frac{Np-N-\alpha+2\tau}4}.
\end{aligned}
\end{equation}
Now, by the same way as in \eqref{1231-1}, one obtains
\begin{equation} \label{3.23}
\begin{aligned}
\int_{B^c(R)}\big(J_\alpha*|\cdot|^{-\tau}|u|^p\big)|x|^{-\tau}|u|^p\,dx
&\lesssim\||x|^{-\tau}u^p\|_{{\frac{2N}{\alpha+N}}}\|
 |x|^{-\tau}u^p\|_{L^{\frac{2N}{\alpha+N}}(B^c(R))} \\
&\lesssim R^{-\tau}\|u\|_{\frac{2Np}{\alpha+N}}^p\|
 |x|^{-\tau}u^p\|_{L^{\frac{2N}{\alpha+N}}} \\
&\lesssim R^{-\tau}\|\Delta u\|^{\frac{Np-N-\alpha}4}
 \|\Delta u\|^{\frac{Np-N-\alpha+2\tau}4} \\
&\lesssim R^{-\tau}\|\Delta u\|^{\frac{Np-N-\alpha+\tau}2},
\end{aligned}
\end{equation}
where $\frac{Np-N-\alpha+\tau}2 =B-\frac\tau2\in (0, 2]$. 
Thus, by the interpolation  $\|\nabla u\|^2\lesssim\|\Delta u\|\|u\|$ 
and Young's estimate via \eqref{3.40} and \eqref{3.23} one obtains 
\begin{equation}\label{3.24}
M_R'
\lesssim\mathcal I[u]+R^{-\tau}\|\Delta u\|^{B-\frac\tau2}
+R^{-2}\|\Delta u\|^2+R^{-2}.
\end{equation}
Since $\mathcal I[u]<0$, by Gagliardo-Nirenberg estimate in Proposition 
\ref{gag} via the mass conservation law, one has
\[
\|\Delta u\|^2
\lesssim \mathcal P[u]
\lesssim \|\Delta u\|^{B}\|u\|^{2p-B}
\lesssim \|\Delta u\|^{B}.
\]
Thus, $B>2$ implies that there exists $C_1>0$ such that for any 
$t\in[0, T^*)$,
\begin{equation} \label{3522}
\|\Delta u(t)\|\geq C_1.
\end{equation}
Further, \eqref{2nw}, \eqref{3.24} and \eqref{3522},  for
 $2<B\leq2+\frac\tau2$ and $R\gg 1$, give
\begin{equation} \label{3.26}
\begin{aligned}
 M_R'
&\lesssim \mathcal I[u]+R^{-2}+R^{-2}\|\Delta u\|^2+R^{-\tau}
 \|\Delta u\|^{B-\frac\tau2} \\
&\lesssim -\|\Delta u\|^2+R^{-2}+R^{-2}\|\Delta u\|^2+R^{-\tau}
 \|\Delta u\|^{B-\frac\tau2} \\
&\lesssim \|\Delta u\|^2\Big(-1+R^{-2}+R^{-\tau}
 \|\Delta u\|^{B-2-\frac\tau2}\Big) \\
&\lesssim -\|\Delta u\|^2.
\end{aligned}
\end{equation}
By time integration, \eqref{3522} and \eqref{3.26} imply that 
\begin{equation} \label{3.25'}
M_R[u(t)]\lesssim-t, \quad t>T>0. 
\end{equation}
By time integration again, from \eqref{3.26}, it follows that
\begin{equation}
M_R[u(t)]
\lesssim -\int_T^t\|\Delta u(s)\|^2\,ds,\quad\forall t>T \label{3.27}.
\end{equation}
Now, the definition \eqref{mrwtz} and an interpolation argument give
\begin{equation}
|M_R[u]|
= 2|\Im\int_{\mathbb{R}^N} \bar u(\nabla\xi_R\cdot\nabla u)\,dx| 
\lesssim R\|\nabla u\|\|u\| 
\lesssim R\|\Delta u\|^{1/2} \label{3.28}.
\end{equation}
So, by \eqref{3.25'}, \eqref{3.27} and \eqref{3.28}, it follows that
\begin{equation} \label{3.29}
\int_T^t\|\Delta u(s)\|^2\,ds\lesssim| M_R[u(t)]|
\lesssim R\|\Delta u(t)\|^{1/2},\quad \forall t>T.
\end{equation} 
Take the real function $f(t):=\int_T^t\|\Delta u(s)\|^2$. 
By \eqref{3.29}, one obtains $f^4\lesssim {f'}$. 
Like previously, this ODI has no global solution. This completes the proof.

\subsubsection*{Second case}

Assume that \eqref{t13} holds. It is sufficient to prove the next 
intermediate result. 

\begin{lemma}\label{ky1'}
There exist $C>0$ and $\varepsilon>0$, such that for any $t\in [0, T^*)$, 
the following statements hold:
\begin{gather}
\mathcal{I}[u(t)]<-C<0, \nonumber\\
\mathcal I[u]+\varepsilon \|\Delta u\|^2<0.\label{nw11}
\end{gather}
\end{lemma}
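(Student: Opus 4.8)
The plan is to follow almost verbatim the strategy of Lemma \ref{ky1}, replacing the operator norm $\|\sqrt{\mathcal K_\lambda}\cdot\|$ by $\|\Delta\cdot\|$, since for $s=2$ one has $\mathcal K_{2,\lambda}=\Delta^2$ and hence $\|\sqrt{\mathcal K_{2,\lambda}}u\|=\|\Delta u\|$. For the first inequality I would introduce the conserved quantity $\mathcal C:=\frac{C_{N,p,\tau,\alpha,\lambda}}{p}\|u_0\|^A$ (recall that $\mathcal M$ is conserved) and invoke the sharp Gagliardo-Nirenberg bound \eqref{ineq} to write, on $[0,T^*)$,
\begin{equation*}
F(\|\Delta u(t)\|^2):=\|\Delta u(t)\|^2-\mathcal C\|\Delta u(t)\|^{B}\le \mathcal E[u_0].
\end{equation*}
Because $B>2$, the one-variable function $F$ attains a strict maximum at $x_1=\big(\frac{2}{\mathcal CB}\big)^{2/(B-2)}$, and substituting the value of the sharp constant \eqref{part3} together with the Pohozaev relations \eqref{poh} gives $F(x_1)=\frac{B-2}{A}\big(\mathcal M[u_0]\big)^{-\alpha_c}\big(\mathcal M[\varphi]\big)^{1/s_c}$, exactly as in the computation \eqref{xx1}, now read with $\alpha_c=\frac{s}{s_c}-1$ for $s=2$.

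Next I would use the threshold hypothesis \eqref{t13} to show both $\mathcal E[u_0]<F(x_1)$ and $\|\Delta u_0\|^2>x_1$. Since $t\mapsto\|\Delta u(t)\|^2$ is continuous and $F\le\mathcal E[u_0]<F(x_1)$ along the flow, a connectedness argument forbids the trajectory from crossing the maximizer $x_1$; hence $\|\Delta u(t)\|^2>x_1$ for all $t\in[0,T^*)$, equivalently $\mathcal{MG}[u(t)]>1$ throughout. Feeding this into the decomposition $2\mathcal I[u]=B\mathcal E[u]-(B-2)\|\Delta u\|^2$ and the Pohozaev identity $B\mathcal E[\varphi]=(B-2)\|\Delta\varphi\|^2$, one obtains, for some fixed $\nu>0$,
\begin{equation*}
\mathcal I[u(t)]\,\mathcal M[u(t)]^{\alpha_c}\le -\nu\Big(\frac B2-1\Big)\|\Delta\varphi\|^2\,\mathcal M[\varphi]^{\alpha_c}<0,
\end{equation*}
which, by mass conservation, yields the uniform gap $\mathcal I[u(t)]<-C<0$.

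For the second inequality \eqref{nw11} I would argue by contradiction: if it failed for every $\varepsilon>0$, there would be times $t_n$ with $-\varepsilon_n(\frac B2-1)\|\Delta u(t_n)\|^2<\mathcal I[u(t_n)]<0$ and $\varepsilon_n\to0$; rewriting this through $2\mathcal I[u]=B\mathcal E[u]-(B-2)\|\Delta u\|^2$ gives $(1-\varepsilon_n)(1-\frac2B)\|\Delta u(t_n)\|^2<\mathcal E[u_0]$. Multiplying by $\mathcal M[u_0]^{\alpha_c}$, invoking $\mathcal{MG}[u(t_n)]>1$ together with the Pohozaev identity, and letting $n\to\infty$ produces
\begin{equation*}
\mathcal E[u_0]\,\mathcal M[u_0]^{\alpha_c}\ge\mathcal E[\varphi]\,\mathcal M[\varphi]^{\alpha_c},
\end{equation*}
that is $\mathcal{ME}[u_0]\ge1$, contradicting \eqref{t13}.

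The argument is structurally identical to the $s=1$ case, so the only genuine work is bookkeeping: one must confirm that the sharp constant \eqref{part3} and the Pohozaev identities \eqref{poh} from Proposition \ref{gag} hold for general $s\in\{1,2\}$ (they do) and that the scale-invariant exponent is taken with $s=2$. The single delicate point is the connectedness step that upgrades the pointwise bound $\|\Delta u_0\|^2>x_1$ to the time-uniform bound $\|\Delta u(t)\|^2>x_1$; this is precisely where the \emph{strict} inequality $\mathcal E[u_0]<F(x_1)$ is essential, since it is what keeps the continuous trajectory $t\mapsto\|\Delta u(t)\|^2$ from ever reaching the maximizer $x_1$.
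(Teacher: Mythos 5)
Your proposal follows essentially the same route as the paper's proof: the sharp Gagliardo--Nirenberg bound defining $F$, the maximizer $x_1$, the strict gap $\mathcal E[u_0]<F(x_1)$ combined with continuity in time to propagate $\|\Delta u(t)\|^2>x_1$ (i.e. $\mathcal{MG}[u(t)]>1$), the decomposition $2\mathcal I[u]=B\mathcal E[u]-(B-2)\|\Delta u\|^2$ together with the Pohozaev identities to get the uniform negative bound, and the same time-sequence contradiction argument for \eqref{nw11}. The only slip is cosmetic: for $s=2$ the correct exponent is $\big(\mathcal M[\varphi]\big)^{2/s_c}$ (that is, $s/s_c$), not $1/s_c$, but since the same quantity appears on both sides of the comparison $\mathcal E[u_0]<F(x_1)$, this bookkeeping detail does not affect the argument.
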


\begin{proof}
(1)
Define the quantity
$\mathcal C:=\frac{C_{N,p,\tau,\alpha,\lambda}}{p}\| u\|^{A}$.
Then, by Proposition \ref{gag}, one writes
\begin{equation}\label{xxxx}
F(\|\Delta u(t)\|^2):=\|\Delta u(t)\|^2 
- \mathcal C\|\Delta u(t)\|^{B}\leq \mathcal E[u_0], 
\quad \text{on }[0,T^*).
\end{equation}
Now, since $p>p_c$ gives $B>2,$ the above real function $F$ has a 
maximum 
$$
F(x_1):=F\Big[\Big( \frac{2}{\mathcal C B}\Big)^{\frac{2}{ B-2}}\Big]
=\Big( \frac{2}{\mathcal C B}\Big)^{\frac{2}{B-2}}\Big( 1-\frac{2}{B}\Big) .
$$
Moreover, thanks to Pohozaev identities \eqref{poh} and  condition 
\eqref{t13}, it follows that
\begin{equation}\label{x}
\mathcal E[u_0]
< \frac{B-2}{A }\big(\mathcal M[u_0]\big)^{-\alpha_c}
\big(\mathcal M[\varphi]\big)^{2/s_c}.
\end{equation}
In addition, by \eqref{part3} and the equality $s_c=\frac{B-2}{p-1}$, 
one obtains
\begin{equation} \label{xx}
\begin{aligned}
F(x_1)
&= \Big(\frac2{B \mathcal C}\Big)^\frac2{B-2}\Big(1-\frac2{B }\Big) \\
&= \Big((\frac{A }{B })^{1-\frac{B }2}(\mathcal M[\varphi])^{p-1}
 (\mathcal M[u_0])^{-A/2}\Big)^\frac2{B-2}\Big(1-\frac2{B }\Big) \\
&= \frac{B-2}{A }\Big((\mathcal M[u_0])^{-A/2}
 (\mathcal M[\varphi])^{p-1}\Big)^\frac2{B-2} \\
&=  \frac{B-2}{A }\big(\mathcal M[u_0]\big)^{-\alpha_c}
 \big( \mathcal M[\varphi]\big)^{2/s_c}.
\end{aligned}
\end{equation}
Relations \eqref{x} and \eqref{xx} imply that
$\mathcal E[u_0]<F(x_1)$.
By the previous inequality and \eqref{xxxx}, one has
\begin{equation}\label{ss3}
 F\Big(\|\Delta u(t)\|^2\Big) \leq\mathcal E[u_0]<F(x_1). 
\end{equation}
Direct calculations show that
$$
x_1=\frac{B }{A }\big(\mathcal M[\varphi]\big)^{2/s_c}
\big(\mathcal M[u_0]\big)^{-\alpha_c}.$$
Now, the inequality \eqref{t13} reads via \eqref{poh},
$$
\|\Delta u_0\|^2>\frac{B }{A }\mathcal M[\varphi]
\Big(\frac{\mathcal M[\varphi]}{\mathcal M[u_0]}\Big)^{\alpha_c}=x_1.
$$
Thus, the continuity in time with \eqref{ss3} give
$$ 
\|\Delta u (t)\|^2>x_1,\quad \forall\, t\in [0,T^*) .
$$
Further, one has 
\begin{equation}
 \mathcal{MG}[u(t)]>1,\quad\text{for all } t\in[0,T^*).\label{stb}
\end{equation}
Now, by Pohozaev identity \eqref{poh}, one has 
$B\mathcal E[\varphi]=(B-2)\|\Delta\varphi\|^2$. So, it follows that 
for some $0<\nu<1$,
\begin{align*}
\mathcal I[u][\mathcal M[u]]^{\alpha_c}
&= \Big(\|\Delta u\|^2-\frac{B }{2p}\mathcal P[u]\Big)
 [\mathcal M[u]]^{\alpha_c}\\
&= \frac{B}{2}\mathcal E[u][\mathcal M[u]]^{\alpha_c}
 -(\frac{B }{2}-1)\|\Delta u\|^2[\mathcal M[u]]^{\alpha_c}\\
&\leq \frac{B }{2}(1-\nu)\mathcal E[\varphi][\mathcal M[\varphi]
 ]^{\alpha_c}-(\frac{B }{2}-1)\|\Delta\varphi\|^2
  [\mathcal M[\varphi]]^{\alpha_c}\\
&\leq -\nu(\frac{B }{2}-1)\|\Delta\varphi\|^2[\mathcal M
 [\varphi]]^{\alpha_c}.
\end{align*}
The proof of the first point is complete.

(2) Assume that \eqref{nw11} fails, then there exists a time sequence 
$\{t_n\}\subset [0, T^*)$, such that
\begin{equation}
   -\varepsilon_n\big(\frac B2-1\big)\|{\Delta}u(t_n)\|^2
   < \mathcal{I}[u(t_n)]<0,\label{2rv6}
\end{equation}
where $\varepsilon_n\to0$ as $n\to \infty$. Moreover, note that
$$
2 \mathcal{I}[u(t_n)]=B \mathcal E[u(t_n)]-(B-2)\|{\Delta}u(t_n)\|^2.
$$
Hence, \eqref{2rv6} implies that
\begin{equation}
 (1-\varepsilon_n)\big(1-\frac2B\big)\|{\Delta}u(t_n)\|^2
 <\mathcal E[u_0]. \label{2rv7}
\end{equation}
Further,  by \eqref{poh}, \eqref{t13}, \eqref{stb} and \eqref{2rv7}, 
we obtain
\begin{equation}  \label{2rv8}
\begin{aligned}
\mathcal E[u_0]\mathcal M[u_0]^{\alpha_c}
 &>(1-\varepsilon_n)\big(1-\frac2B\big)\|{\Delta}u(t_n)\|^2
 \mathcal M[u_0]^{\alpha_c} \\
 &>(1-\varepsilon_n)\big(1-\frac2B\big)\|{\Delta}\varphi\|^2
  \mathcal M[\varphi]^{\alpha_c} \\
 &>(1-\varepsilon_n)\mathcal E[\varphi]\mathcal M[\varphi]^{\alpha_c}.
\end{aligned}
\end{equation}
Taking $n\to\infty$ in \eqref{2rv8}, we obtain
\begin{equation}
\mathcal E[u_0]\mathcal M[u_0]^{\alpha_c}
 \geq\mathcal E[\varphi]\mathcal M[\varphi]^{\alpha_c} \label{2rv9}.
\end{equation}
The proof of \eqref{2nw} is achieved by the contradiction of 
\eqref{2rv9} with \eqref{t13}. Hence, this lemma is established.
\end{proof}

\section{Schr\"odinger equation with local source term}

In this section, we establish Theorem \ref{t1'}.

\subsection{Schr\"odinger equation with inverse square potential}
In this subsection, we take $s=1$. 
\subsubsection*{First case}
One keeps previous notation and assume that \eqref{ss'} holds. 
We start with the next auxiliary result which can be proved arguing as 
in Lemma \ref{ky}.

\begin{lemma}\label{ky'}
 \begin{enumerate}
 \item The set $\mathcal{A'}^-$ is stable under the flow of \eqref{S'}.
 \item There exists $\varepsilon>0$, such that for any $t\in [0, T^*)$,
\begin{equation}
 \mathcal J[u(t)]+\varepsilon \|\sqrt{\mathcal K_\lambda}u(t)\|^2
 \leq-\frac{B'}4\big(m'-\mathcal{S'}[u(t)]\big).\label{nw'}
\end{equation}
\end{enumerate}
\end{lemma}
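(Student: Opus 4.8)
The plan is to run the proof of Lemma~\ref{ky} essentially verbatim, performing the substitutions $\mathcal I\to\mathcal J$, $\mathcal P\to\mathcal Q$, $\mathcal S\to\mathcal S'$, $m\to m'$, $p\to q$, $B\to B'$, and $\mathcal A^-\to\mathcal A'^-$. The one structural input that must be recomputed is the scaling behaviour: under $u_\rho:=\rho^{N/2}u(\rho\cdot)$, the change of variables $y=\rho x$ gives $\|u_\rho\|=\|u\|$, $\|\sqrt{\mathcal K_\lambda}u_\rho\|=\rho\|\sqrt{\mathcal K_\lambda}u\|$ and $\mathcal Q[u_\rho]=\rho^{B'}\mathcal Q[u]$, where for $s=1$ one has $B'=Nq-N+2\tau$. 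The hypothesis $q>q_c$ of Theorem~\ref{t1'} is exactly what yields $B'>2$, and this sign drives the whole argument.

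For part (1) I would argue by continuity and contradiction. Suppose $u_0\in\mathcal A'^-$ but $u(t_0)\notin\mathcal A'^-$ for some $0<t_0<T^*$. Since $\mathcal S'=\mathcal E'+\mathcal M$ is conserved, $\mathcal S'[u(t)]=\mathcal S'[u_0]<m'$ for all $t$, so the only way to leave $\mathcal A'^-$ is through $\mathcal J[u(t_0)]\geq0$. By continuity of $t\mapsto\mathcal J[u(t)]$ there is then a time $t_1$ with $\mathcal J[u(t_1)]=0$ and $\mathcal S'[u(t_1)]<m'$, contradicting the definition \eqref{m'} of $m'$.

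For part (2) I would set $\digamma(\rho):=\mathcal S'[u_\rho]=\rho^2\|\sqrt{\mathcal K_\lambda}u\|^2+\|u\|^2-\frac{\rho^{B'}}{q}\mathcal Q[u]$ and compute $\digamma'(\rho)=2\rho^{-1}\mathcal J[u_\rho]$, hence $\rho\digamma'(\rho)=2\mathcal J[u_\rho]$. Differentiating once more and using $B'>2$ gives $(\rho\digamma'(\rho))'=B'\digamma'(\rho)-2(B'-2)\rho\|\sqrt{\mathcal K_\lambda}u\|^2\leq B'\digamma'(\rho)$. Writing $\mathcal J[u_\rho]=\rho^2(\|\sqrt{\mathcal K_\lambda}u\|^2-\frac{\rho^{B'-2}}{2q}\mathcal Q[u])=:\rho^2\aleph(\rho)$, with $\aleph(0)>0$ and $\aleph(1)=\mathcal J[u]<0$, the intermediate value theorem produces $\rho_0\in(0,1)$ with $\mathcal J[u_{\rho_0}]=0$; thus $\digamma'(\rho_0)=0$ and $\digamma(\rho_0)=\mathcal S'[u_{\rho_0}]\geq m'$ by \eqref{m'}. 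Integrating the differential inequality on $[\rho_0,1]$ and inserting $\digamma'(1)=2\mathcal J[u]$, $\rho_0\digamma'(\rho_0)=0$, $\digamma(1)=\mathcal S'[u]$ yields $\mathcal J[u]\leq-\frac{B'}{2}(m'-\mathcal S'[u])$.

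To conclude I would extract $\|\sqrt{\mathcal K_\lambda}u\|^2=\frac{B'}{B'-2}(\mathcal S'[u]-\frac{2}{B'}\mathcal J[u]-\|u\|^2)$ from the definitions of $\mathcal J$ and $\mathcal S'$, substitute it into $\mathcal J[u]+\varepsilon\|\sqrt{\mathcal K_\lambda}u\|^2$, and absorb the $O(\varepsilon)$ terms using the bound $\mathcal J[u]\leq-\frac{B'}{2}(m'-\mathcal S'[u])$ together with $\|u\|^2\geq0$ and $m'-\mathcal S'[u]>0$; for $0<\varepsilon\ll1$ this produces the stated inequality \eqref{nw'} with $-\frac{B'}{4}$ in front. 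I do not expect a genuine obstacle, as the scheme is identical to Lemma~\ref{ky}; the only point requiring real care is verifying that every sign transfers correctly and, in particular, that the scaling exponent in $\mathcal Q[u_\rho]=\rho^{B'}\mathcal Q[u]$ is precisely $B'=(Nq-N+2\tau)/s$ with $s=1$, since it is this value that makes $B'>2$ equivalent to $q>q_c$ and hence keeps the differential inequality pointing the right way.
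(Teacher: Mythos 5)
Your proposal is correct and matches the paper exactly: the paper itself gives no separate argument for Lemma \ref{ky'}, stating only that it ``can be proved arguing as in Lemma \ref{ky}'', which is precisely the substitution scheme $\mathcal I\to\mathcal J$, $\mathcal P\to\mathcal Q$, $\mathcal S\to\mathcal S'$, $m\to m'$, $B\to B'$ that you carry out. Your verification that $\mathcal Q[u_\rho]=\rho^{B'}\mathcal Q[u]$ and that $q>q_c$ is equivalent to $B'>2$ is exactly the point that makes the transferred argument go through.
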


Proposition \ref{mrwz1} via \eqref{symm} and \eqref{sym1} gives
\begin{align*}
M'_R[u]
&= 4\int_{\mathbb{R}^N}\Big[\Big(\frac{\delta_{lk}}r
 -\frac{x_lx_k}{r^3}\Big)\partial_r\xi_R
 +\frac{x_lx_k}{r^2}\partial_r^2\xi_R\Big]
 \Re(\partial_ku\partial_l\bar u)\,dx
 -\int_{\mathbb{R}^N}\Delta^2\xi_R|u|^2\,dx\\
&\quad + 4\lambda\int_{\mathbb{R}^N}\partial_r\xi_R\frac{|u|^2}{|x|^3}\,dx
+2(\frac1q-1)\int_{\mathbb{R}^N}(\partial_r^2\xi_R+\frac{N-1}r\partial_r\xi_R)
 |x|^{-2\tau}|u|^{2q}\,dx\\
&\quad -\frac{4\tau}q\int_{\mathbb{R}^N}\frac{\partial_r\xi_R}r
 |x|^{-2\tau}|u|^{2q}\,dx\\
&= 4\int_{\mathbb{R}^N}\Big[\Big(\frac{|\nabla u|^2}r
 -\frac{|x\cdot\nabla u|^2}{r^3}\Big)\partial_r\xi_R
  +\frac{|x\cdot\nabla u|^2}{r^2}\partial_r^2\xi_R\Big]\,dx
  -\int_{\mathbb{R}^N}\Delta^2\xi_R|u|^2\,dx\\
&\quad + 4\lambda\int_{\mathbb{R}^N}\partial_r\xi_R\frac{|u|^2}{|x|^3}\,dx
 +2(\frac1q-1)\int_{\mathbb{R}^N}\Big(\partial_r^2\xi_R+({N-1}
 +\frac{2\tau}{q-1})\frac{\partial_r\xi_R}r\Big)
 |x|^{-2\tau}|u|^{2q}\,dx.
\end{align*}
Now, noting that $\lambda\geq0$, by \eqref{norm} and \eqref{prpr}, one obtains
\begin{align*}
M'_R[u]-8\mathcal J[u]
&= 4\int_{\mathbb{R}^N}\Big[\Big(\frac{|\nabla u|^2}r
 -\frac{|x\cdot\nabla u|^2}{r^3}\Big)\partial_r\xi_R
 +\frac{|x\cdot\nabla u|^2}{r^2}\partial_r^2\xi_R\Big]\,dx\\
&\quad -\int_{\mathbb{R}^N}\Delta^2\xi_R|u|^2\,dx+4\lambda\int_{\mathbb{R}^N}\partial_r
 \xi_R\frac{|u|^2}{|x|^3}\,dx-8\|\sqrt {\mathcal K_{\lambda}} u\|^2\\
&\quad -2\frac{q-1}q\int_{\mathbb{R}^N}\Big(\partial_r^2\xi_R+({N-1}
 +\frac{2\tau}{q-1})\frac{\partial_r\xi_R}r
 -2\frac{B'}{q-1}\Big)|x|^{-2\tau}|u|^{2q}\,dx\\
&\leq 4\int_{\mathbb{R}^N}\frac{|x\cdot\nabla u|^2}{r^2}\Big(\partial_r^2\xi_R
 -\frac{\partial_r\xi_R}r\Big)\,dx
 -\int_{\mathbb{R}^N}\Delta^2\xi_R|u|^2\,dx\\
&\quad -2\frac{q-1}q\int_{\mathbb{R}^N}\Big(\partial_r^2\xi_R+({N-1}
 +\frac{2\tau}{q-1})\frac{\partial_r\xi_R}r
 -2\frac{B'}{q-1}\Big)|x|^{-2\tau}|u|^{2q}\,dx.
\end{align*}
Using the estimate $\|\nabla^\gamma \xi_R\|_\infty\lesssim R^{2-|\gamma|}$
 via the mass conservation law, one has
\begin{equation}
\big|\int_{\mathbb{R}^N}\Delta^2\xi_R|u|^2\,dx\big|
\lesssim R^{-2}. \label{mr11}
\end{equation}
Moreover, one decomposes the above quantity as follows
\begin{equation} \label{Ai}
\begin{aligned}
M'_R[u]-8\mathcal J[u]
&\leq 4\int_{\mathbb{R}^N}\frac{|x\cdot\nabla u|^2}{r^2}\Big(\partial_r^2\xi_R
 -\frac{\partial_r\xi_R}r\Big)\,dx
-\int_{\mathbb{R}^N}\Delta^2\xi_R|u|^2\,dx \\
&\quad -2\frac{q-1}q\int_{\mathbb{R}^N}\Big(\partial_r^2\xi_R+({N-1}
 +\frac{2\tau}{q-1})\frac{\partial_r\xi_R}r-2\frac{B'}{q-1}
 \Big)|x|^{-2\tau}|u|^{2q}\,dx \\
&:= -(A_1)-2\frac{q-1}q\cdot(A_2).
\end{aligned}
\end{equation}
By the properties of $\xi_R$, 
$$
\partial_r^2\xi_R+({N-1}+\frac{2\tau}{q-1})\frac{\partial_r\xi_R}r
-2\frac{B'}{q-1}=0,\quad\text{for}\quad B(R).
$$
Thus, by the Gagliardo-Nirenberg estimate via the mass conservation law, 
\eqref{prpr} and \eqref{norm}, one obtains
\begin{equation} \label{3.241}
\begin{aligned}
(A_2)
&= \int_{B^c(R)}\Big(\partial_r^2\xi_R+({N-1}
 +\frac{2\tau}{q-1})\frac{\partial_r\xi_R}r-2\frac{B'}{q-1}\Big)
 |x|^{-2\tau}|u|^{2q}\,dx \\
&\lesssim R^{-2\tau}\int_{\mathbb{R}^N}|u|^{2q}\,dx \\
&\lesssim R^{-2\tau}\|\sqrt{\mathcal K_\lambda} u\|^{B'-2\tau}
 \|u\|^{2q-B'+2\tau} \\
&\lesssim R^{-2\tau}\|\sqrt{\mathcal K_\lambda} u\|^{B'-2\tau}.
\end{aligned}
\end{equation}
Since $\mathcal J[u]<0$, by the Gagliardo-Nirenberg estimate in
 Proposition \ref{gag2} via the mass conservation law, one has
\[
\|\sqrt{\mathcal K}_\lambda u\|^2
\lesssim \int_{\mathbb{R}^N}|x|^{-2\tau}|u|^{2q}\,dx
\lesssim \|\sqrt{\mathcal K_\lambda} u\|^{B'}\|u\|^{2q-B'}
\lesssim \|\sqrt{\mathcal K_\lambda} u\|^{B'}.
\]
Thus, $B'>2$ implies that there is $C_2>0$ such that for $t\in[0,T^*)$,
\begin{equation} \label{35.56}
\|\sqrt{\mathcal K_\lambda} u(t)\|\geq C_2.
\end{equation}
Thus, \eqref{Ai}-\eqref{3.241} and \eqref{nw'} give for $2<B'\leq2+2\tau$ 
and $R\gg 1$,
\begin{equation} \label{3.261}
\begin{aligned}
M'_R[u]
&\lesssim \mathcal J[u]+R^{-2}+R^{-2\tau}
 \|\sqrt{\mathcal K_\lambda} u\|^{B'-2\tau} \\
&\lesssim -\|\sqrt{\mathcal K_\lambda} u\|^2+R^{-2}+R^{-2\tau}
 \|\sqrt{\mathcal K_\lambda} u\|^{B'-2\tau} \\
&\lesssim \|\sqrt{\mathcal K_\lambda} u\|^2\Big(-1+R^{-2}+R^{-2\tau}
 \|\sqrt{\mathcal K_\lambda} u\|^{B'-2-2\tau}\Big) \\
&\lesssim -\|\sqrt{\mathcal K_\lambda} u\|^2.
\end{aligned}
\end{equation}
By time integration, \eqref{35.56}, and \eqref{3.261} imply that 
\begin{equation}\label{neg}
M_R[u(t)]\lesssim-t,\quad t>T>0.
\end{equation}
By time integration again, from \eqref{3.261} and \eqref{neg}, 
it follows that
\begin{equation}
M_R[u(t)]\lesssim-\int_T^t\|\sqrt{\mathcal K_\lambda} u(s)\|^2\,ds \label{3.271}.
\end{equation}
Now, the definition \eqref{mrwtz} via the mass
conservation law gives
\begin{equation} 
|M_R[u]|
=2|\Im\int_{\mathbb{R}^N} \bar u(\nabla\xi_R\cdot\nabla u)\,dx| 
\lesssim R\|\nabla u\|\|u\| 
\lesssim R\|\nabla u\|. \label{3.281}
\end{equation}
By \eqref{neg}, \eqref{3.271} and \eqref{3.281}, it follows that
\begin{equation}\label{3.291}
\int_T^t\|\sqrt{\mathcal K_\lambda} u(s)\|^2\,ds
\lesssim|M_R[u(t)]|\lesssim R\|\sqrt{\mathcal K_\lambda} u\|,\quad 
\forall t>T.
\end{equation}
Take the real function $f(t):=\int_T^t\|\sqrt{\mathcal K_\lambda} u(s)\|^2$. 
By \eqref{3.291}, one obtains $f^2\lesssim f'$. Like previously, this ODI
has no global solution. This completes the proof.

\subsubsection*{Second case}
The proof is similar to the previous section.

\subsection{Bi-harmonic case}
In this sub-section, one assumes that $s=2$.

\subsubsection*{First case }
Assume that \eqref{ss'} holds. We start with the next auxiliary result 
which can be proved arguing as in Lemma \ref{2ky}.

\begin{lemma}\label{2ky'}
 \begin{enumerate}
 \item The set $\mathcal{A'}^-$ is stable under the flow of \eqref{S'}.
 \item There exists $\varepsilon>0$, such that for any $t\in [0, T^*)$,
 \begin{equation}
 \mathcal J[u(t)]+\varepsilon \|{\Delta}u(t)\|^2
 \leq-\frac{B'}4\big(m'-\mathcal{S'}[u(t)]\big).\label{2nw'}
 \end{equation}
 \end{enumerate}
\end{lemma}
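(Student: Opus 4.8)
The plan is to transcribe the proof of Lemma \ref{2ky} with the non-local data $(\mathcal{I},\mathcal{S},\mathcal{P},B,m,p)$ replaced by their local analogues $(\mathcal{J},\mathcal{S'},\mathcal{Q},B',m',q)$, since the variational structure is formally identical and the only structural input used there, namely $p>p_c\Rightarrow B>2$, is replaced here by $q>q_c\Rightarrow B'>2$. I would first record that the mass $\mathcal{M}$ and the energy $\mathcal{E'}$ are conserved along the flow of \eqref{S'}, so that $\mathcal{S'}[u(t)]\equiv\mathcal{S'}[u_0]$.

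For part (1), I would argue by continuity. Suppose $u_0\in\mathcal{A'}^-$ but $u(t_0)\notin\mathcal{A'}^-$ for some $t_0\in(0,T^*)$. Because $\mathcal{S'}[u(t_0)]=\mathcal{S'}[u_0]<m'$ is preserved, leaving $\mathcal{A'}^-$ forces $\mathcal{J}[u(t_0)]\geq0$; since $\mathcal{J}[u_0]<0$ and $t\mapsto\mathcal{J}[u(t)]$ is continuous on $H^2$, there is a first time $t_1$ with $\mathcal{J}[u(t_1)]=0$ and $u(t_1)\neq0$. Then the definition \eqref{m'} of $m'$ gives $\mathcal{S'}[u(t_1)]\geq m'$, contradicting $\mathcal{S'}[u(t_1)]=\mathcal{S'}[u_0]<m'$.

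For part (2), I would use the mass-preserving dilation $u_\rho:=\rho^{N/2}u(\rho\cdot)$. A change of variables yields $\|u_\rho\|=\|u\|$, $\|\Delta u_\rho\|=\rho^2\|\Delta u\|$, and the crucial identity $\mathcal{Q}[u_\rho]=\rho^{Nq-N+2\tau}\mathcal{Q}[u]=\rho^{2B'}\mathcal{Q}[u]$, where the last step uses $2B'=Nq-N+2\tau$ for $s=2$. Setting $\Upsilon(\rho):=\mathcal{S'}[u_\rho]=\rho^4\|\Delta u\|^2+\|u\|^2-\frac{\rho^{2B'}}{q}\mathcal{Q}[u]$, I would verify $\rho\Upsilon'(\rho)=4\mathcal{J}[u_\rho]$ and, using $B'>2$, the inequality $(\rho\Upsilon'(\rho))'\leq 2B'\Upsilon'(\rho)$. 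Writing $\mathcal{J}[u_\rho]=\rho^4\Xi(\rho)$ with $\Xi(0)=\|\Delta u\|^2>0$ and $\Xi(1)=\mathcal{J}[u]<0$ produces $\rho_0\in(0,1)$ with $\mathcal{J}[u_{\rho_0}]=0$, hence $\Upsilon'(\rho_0)=0$ and $\Upsilon(\rho_0)=\mathcal{S'}[u_{\rho_0}]\geq m'$. Integrating the differential inequality on $[\rho_0,1]$ and inserting $\Upsilon'(1)=4\mathcal{J}[u]$ gives $\mathcal{J}[u]\leq-\frac{B'}{2}(m'-\mathcal{S'}[u])$. Substituting $\|\Delta u\|^2=\frac{B'}{B'-2}\big(\mathcal{S'}[u]-\frac{2}{B'}\mathcal{J}[u]-\|u\|^2\big)$ and choosing $0<\varepsilon\ll1$ exactly as in \eqref{24.251} then absorbs the remainder and yields \eqref{2nw'}.

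Since the bi-harmonic difficulties (the Morawetz identity and the absence of a variance) enter only in the main blow-up argument and not in this variational lemma, the computation here is routine. The one point I would watch carefully---the main obstacle---is the dilation exponent of $\mathcal{Q}$: the singular weight $|x|^{-2\tau}$ contributes the term $2\tau$, and it is precisely the resulting matching $2B'=Nq-N+2\tau$, together with $B'>2$, that makes the convexity-type inequality $(\rho\Upsilon'(\rho))'\leq 2B'\Upsilon'(\rho)$ hold with the correct sign. Dropping the weight's contribution would break this, so I would double-check that exponent before proceeding.
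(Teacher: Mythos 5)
Your proposal is correct and follows exactly the route the paper intends: the paper proves this lemma by remarking that one argues as in Lemma \ref{2ky}, and your transcription (replacing $\mathcal{I},\mathcal{S},\mathcal{P},B,m,p$ by $\mathcal{J},\mathcal{S'},\mathcal{Q},B',m',q$, with the key scaling identity $\mathcal{Q}[u_\rho]=\rho^{2B'}\mathcal{Q}[u]$ and the convexity inequality $(\rho\Upsilon'(\rho))'\leq 2B'\Upsilon'(\rho)$ from $B'>2$) reproduces that argument faithfully, including the correct treatment of the weight's contribution $2\tau$ to the dilation exponent.
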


Using the estimates \eqref{mr1} and \eqref{mr2} via Morawetz identity 
\eqref{mrw2}, one obtains
\begin{equation}
\begin{aligned}
-M_R'
&=2\int_{\mathbb{R}^N}\Big(2\partial_{jk}\Delta\xi_R\partial_ju\partial_k\bar u
 -\frac12(\Delta^3\xi_R)|u|^2-4\partial_{jk}\xi_R\partial_{ik}u\partial_{ij}
 \bar u \\
&\quad+\Delta^2\xi_R|\nabla u|^2+\frac{q-1}{q}(\Delta\xi_R)|x|^{-2\tau}
 |u|^{2q}-\frac1q\nabla\xi_R\cdot\nabla(|x|^{-2\tau})|u|^{2q}\Big)\,dx \\
&=\frac{8B'}{q}\int_{B(R)}|x|^{-2\tau}|u|^{2q}\,dx-8\int_{\mathbb{R}^N}
 \partial_{jk}\xi_R\partial_{ik}u\partial_{ij}\bar u\,dx+O(R^{-4})
 +\|\nabla u\|^2O(R^{-2}) \\
&\quad +2\frac{q-1}{q}\int_{B^c(R)}(\Delta\xi_R)|x|^{-2\tau}|u|^{2q}\,dx
 -\frac2q\int_{B^c(R)}\nabla\xi_R\cdot\nabla(|x|^{-2\tau})|u|^{2q}\,dx. 
\end{aligned}
\end{equation}
Thus, by \eqref{mr3}, one writes
\begin{align*}
-M_R'
&=\frac{8B'}{q}\int_{B(R)}|x|^{-2\tau}|u|^{2q}\,dx
 -\frac2q\int_{B^c(R)}\nabla\xi_R\cdot\nabla(|x|^{-2\tau})|u|^{2q}\,dx \\
&\quad +2\frac{q-1}{q}\int_{B^c(R)}(\Delta\xi_R)|x|^{-2\tau}|u|^{2q}\,dx
 +O(R^{-4})+\|\nabla u\|^2O(R^{-2}) \\
&\quad -8\sum_{i=1}^N\int_{\mathbb{R}^N}|\nabla u_i|^2\frac{\partial_r\xi_R}{|x|}\,dx
 -8\sum_{i=1}^N\int_{\mathbb{R}^N}\frac{|x\cdot\nabla u_i|^2}{|x|^2}
 \Big(\partial_r^2\xi_R-\frac{\partial_r\xi_R}{|x|}\Big)\,dx \\
&=-16\mathcal J[u]-{\frac{8B'}{q}\int_{B^c(R)}|x|^{-2\tau}|u|^{2q}\,dx}
 +O(R^{-4})+\|\nabla u\|^2O(R^{-2}) \\
&\quad -\frac2q\int_{B^c(R)}\nabla\xi_R\cdot\nabla(|x|^{-2\tau})|u|^{2q}\,dx
+2\frac{q-1}{q}\int_{B^c(R)}(\Delta\xi_R)|x|^{-2\tau}|u|^{2q}\,dx \\
&\quad -8\Big(\sum_{i=1}^N\int_{\mathbb{R}^N}|\nabla u_i|^2
 \big(\frac{\partial_r\xi_R}{|x|}-2\big)\,dx
 +\sum_{i=1}^N\int_{\mathbb{R}^N}\frac{|x\cdot\nabla u_i|^2}{|x|^2}
 \big(\partial_r^2\xi_R-\frac{\partial_r\xi_R}{|x|}\big)\,dx\Big) .
\end{align*}
Then, by an interpolation argument and Young estimate, \eqref{prpr} gives
\begin{equation}
 M_R'
\lesssim\mathcal J[u]+O\Big(\int_{B^c(R)}|x|^{-2\tau}|u|^{2q}\,dx\Big)
+R^{-2}+R^{-2}\|\Delta u\|^2 .
\end{equation}
Since $1<q<\frac N{N-4}$, by the Gagliardo-Nirenberg inequality, one writes
\[
\int_{B^c(R)}|x|^{-2\tau}|u|^{2q}\,dx
\leq cR^{-2\tau}\|u\|_{2q}^{2q}
\leq cR^{-2\tau}\|\Delta u\|^{\frac N2(q-1)}.
\]
So, 
\begin{equation}
M_R'[u]
\lesssim\mathcal J[u]+R^{-2\tau}\|\Delta u\|^{B'-\tau}+R^{-2}. \label{4.24}
\end{equation}
Since $\mathcal J[u]<0$, by the Gagliardo-Nirenberg estimate in 
Proposition \ref{gag2} via the mass conservation law, one has
\[
\|\Delta u\|^2
\lesssim\mathcal Q[u]
\lesssim\|\Delta u\|^{B'}\|u\|^{2q-B'}
\lesssim\|\Delta u\|^{B'}.
\]
Thus, $B'>2$ implies that there is $C_3>0$ such that for any $t\in[0,T^*)$,
\begin{equation} \label{3744}
\|\Delta u(t)\|\geq C_3.
\end{equation}
Thus, \eqref{2nw'}-\eqref{3744} give for $2<B'\leq2+\tau$ and 
$R\gg 1$,
\begin{equation} \label{4.26}
\begin{aligned}
 M_R'[u]
&\lesssim \mathcal J[u]+R^{-2}+R^{-2}\|\Delta u\|^2+R^{-2\tau}
 \|\Delta u\|^{B'-\tau} \\
&\lesssim -\|\Delta u\|^2+R^{-2}+R^{-2}\|\Delta u\|^2
 +R^{-2\tau}\|\Delta u\|^{B'-\tau} \\
&\lesssim \|\Delta u\|^2\Big(-1+R^{-2}+R^{-2\tau}
 \|\Delta u\|^{B'-2-\tau}\Big) \\
&\lesssim -\|\Delta u\|^2.
\end{aligned}
\end{equation}
By time integration, \eqref{3744} and \eqref{4.26} imply that 
\begin{equation} \label{neg'}
M_R[u(t)]\lesssim-t,\quad t>T>0 
\end{equation}
By time integration again, from \eqref{4.26} and \eqref{neg'}, 
it follows that
\begin{equation}
 M_R[u(t)]
\lesssim -\int_T^t\|\Delta u(s)\|^2\,ds\label{4.27}.
\end{equation}
The rest of the proof follows as previously.

\subsection*{Acknowledgements} 
 R. Bai was supported by the Postdoctoral Fellowship Program of CPSF 
 (Grant No. GZC20230694). 
 The authors want to thank the anonymous referees for the 
 helpful comments and suggestions.


\begin{thebibliography}{10}

\bibitem{mt}  M. G. Alharbi. T. Saanouni;
\emph{Sharp threshold of global well-posedness vs finite time blow-up for 
a class of inhomogeneous Choquard equations}, 
J. Math. Phys., 60 (8), 2019, 24.

\bibitem{ajk} J. An, R. Jang, J. Kim;
\emph{Global existence and blow-up for the focusing inhomogeneous nonlinear 
schr\"odinger equation with inverse-square potential}, 
Discr. Cont. Dyn. Syst.- Ser. B., 28 (2), 2023, 1046-1067.

\bibitem{ajk1} J. An, J. Kim, P. Ryu;
\emph{Local well-posedness for the inhomogeneous
biharmonic nonlinear Schr\"odinger equation in Sobolev spaces}, 
Z. Anal. Anwend., 41 (1/2), 2022, 239-258.


\bibitem{bl} R. Bai, B. Li;
\emph{Finite time/Infinite time blow-up 
behaviors for the inhomogeneous nonlinear Schr\"odinger equation},
 Nonlinear Anal., 232 (2023), 113266.

\bibitem{abde} A. A. Balinsky, W. D. Evans;
\emph{Some Recent Results on Hardy-Type Inequalities}, 
Appl. Math. Inf. Sci., 4 (2), 2010, 191-208.

\bibitem{bgvt}  J. Belmonte-Beitia, V. M. P\'{e}rez-Garc\'{ı}a, 
V. Vekslerchik, P. J. Torres;
\emph{Lie symmetries and solitons in nonlinear systems
with spatially inhomogeneous nonlinearities}, 
Phys. Rev. Lett., 98 (6), 2007, 064102.

\bibitem{lb} L. Berg\'e;
\emph{Soliton stability versus collapse}, 
Phys. Rev. E., 62 (3), 2000, 3071-3074. 

\bibitem{bhl} T. Boulenger, E. Lenzmann;
\emph{Blowup for biharmonic NLS}, 
Ann. Sci. \'{E}c. Norm. Sup\'{e}r., 50 (3), 2017, 503–544.

\bibitem{cg} L. Campos, C. M. Guzm\'an;
\emph{On the inhomogeneous NLS with inverse-square potential},
 Z. Angew. Math. Phys., 72 (4), 2021, 29.
 
\bibitem{cg1} L. Campos, C.M. Guzm\'an;
\emph{Scattering for the non-radial inhomogenous biharmonic NLS equation}.
 Calc. Var., 61 (4), 2022, 15.
 
\bibitem{cf} M. Cardoso, L. G. Farah;
\emph{Blow-up of non-radial solutions for the $L^2$ critical
inhomogeneous NLS equation}, Nonlinearity, 35 (8), 2022.

\bibitem{cow} Y. Cho, T. Ozawa, C. Wang;
\emph{Finite time blowup for the fourth-order NLS},
 Bull. Korean Math. Soc., 53 (2), 2016, 615-640.

\bibitem{vdd2} V.D. Dinh;
 \emph{Non-radial finite time blow-up for the fourth-order
nonlinear Schr\"odinger equations}, Appl. Math. Lett., 132, 2022.

\bibitem{vdd3} V. D. Dinh, S. Keraani;
\emph{Energy scattering for a class of inhomogeneous biharmonic nonlinear
Schr\"odinger equations in low dimensions}, 
arXiv:2211.11824, 2022.

\bibitem{DWZ} D. Du, Y. Wu, K. Zhang;
\emph{On blow-up criterion for the nonlinear Schr\"odinger equation}, 
Discrete Contin. Dyn. Sys., 36 (7), 2016, 3639-3650.

\bibitem{glas} R. T. Glassey;
\emph{On the blowing up of solutions to the Cauchy problem for 
nonlinear Schr\"odinger equations}, J. Math. Phys., 18 (9), 1977, 1794-1797.

\bibitem{gp} C. M. Guzm\'an, A. Pastor;
\emph{On the inhomogeneous biharmonic nonlinear Schr\"odinger equation: 
local, global and stability results},
 Nonlinear Anal. Real World Appl., 56, 2020.
 
\bibitem{gp1}  C. M. Guzm\'an, A. Pastor;
\emph{Some remarks on the inhomogeneous biharmonic NLS equation}, 
Nonlinear Anal. Real World Appl., 67, 2022.

\bibitem{HR} J. Holmer, S. Roudenko;
\emph{Divergence of infinite-variance nonradial solutions to 3D NLS equation},
 Comm. Partial Differ. Equa., 35 (5), 2010, 878-905.
 
\bibitem{jak} R. Jang, J. An, J. Kim;
\emph{The Cauchy problem for the energy-critical inhomogeneous nonlinear 
Schr\"odinger equation with inverse-square potential},  
arXiv:2107.09826, 2021.

\bibitem{ksww} H. Kalf, U.-W. Schmincke, J. Walter, R. W\"ust;
\emph{On the spectral theory of Schr\"odinger and Dirac operators with 
strongly singular potentials}, Spectral Theory and Differential Equations 
(Proceedings Symposium Dundee, 1974; dedicated to Konrad J\"orgens), 
Lecture Notes in Mathematics, Vol. 448, Springer, Berlin, (1975), 182-226.

\bibitem{kar} V. I. Karpman;
\emph{Stabilization of soliton instabilities by higher-order dispersion: 
fourth-order nonlinear Schr\"odinger-
type equations}, Physical Review E., 53 (2), 1996, R1336.

\bibitem{kars} V. I. Karpman, A. G. Shagalov;
\emph{Stability of solitons described by nonlinear schr\"odinger-type 
equations with higher-order dispersion},
 Physica D: Nonlinear Phenomena, 144 (1-2), 2000, 194-210.

\bibitem{kmvbt} Y. V. Kartashov, B. A. Malomed, V. A. Vysloukh, M. R. Belic,
L. Torner;
\emph{Rotating vortex clusters in media with
inhomogeneous defocusing nonlinearity}, 
Opt. Lett., 42 (3), 2017, 446–449.

\bibitem{sk} S. Kim;
 \emph{On well-posedness for inhomogeneous Hartree equations in the 
 critical case},Comm. Pur. Appl. Anal., 22 (7), (2023), 2132-2145.
 
\bibitem{kls} S. Kim, Y. Lee, I. Seo;
\emph{Sharp weighted Strichartz estimates and critical inhomogeneous 
Hartree equations},
 Nonlinear Anal. 240 (2024), 113463.
 
\bibitem{el} E. Lieb, M. Loss;
\emph{Analysis, 2nd ed. Graduate Studies in Mathematics}, 
Vol. 14, American Mathematical Society, Providence, RI, 2001. 

\bibitem{YM} Y. Martel;
\emph{Blow-up for the nonlinear Schr\"odinger equation in nonisotropic 
spaces}, Nonlinear Anal. 28 (12), 1997, 1903-1908.

\bibitem{FM} F. Merle;
\emph{Nonexistence of minimal blow-up solutions of equations 
$iu_t = -\Delta u-k(x)|u|^{{4/N}} u$ in $R^N $}, 
Ann. Inst. H. Poincar\'{e} Phys. Th\'{e}or., 64 (1), 1996, 33-85. 

\bibitem{ot1} T. Ogawa, Y. Tsutsumi;
 \emph{Blow-up of $H^1$ solution for the nonlinear Schr\"odinger equation},
  J. Differ. Equ., 92 (2), 1991, 317-330.
  
\bibitem{ot2} T. Ogawa, Y. Tsutsumi;
\emph{Blow-up of $H^1$ solutions for the one-dimensional nonlinear
Schr\"odinger equation with critical power nonlinearity},
 Proc. Am. Math. Soc., 111 (2), 1991, 487-496.
 
\bibitem{pyn} L. E. Payne, D. H. Sattinger;
\emph{Saddle points and instability of non-linear hyperbolic equations}, 
Isr. J. Math., 22 (1976), 273-303.  DOI: 10.1007/BF02761595

\bibitem{pst} F. Planchon, J. G. Stalker, A. S. Tahvildar-Zadeh;
$L^p$ estimates for the wave equation with the inverse-square potential, 
Discrete Contin. Dyn. Syst., 9 (2), 2003, 427-442.

\bibitem{st4} T. Saanouni;
\emph{Energy scattering for radial focusing inhomogeneous bi-harmonic 
Schr\"odinger equations}, Calc. Var., 60 (3), 2021.

\bibitem{stsubm} T. Saanouni, A. Boubaker;
\emph{Inhomogeneous generalized Hartree equation
with inverse square potential}, SeMA 81, 2024, 679-706.
 
\bibitem{sg} T. Saanouni, R. Ghanmi;
\emph{A note on the inhomogeneous fourth-order Schr\"odinger equation},
 J. Pseudo-Differ. Oper. Appl., 13 (4), 2022.
 
\bibitem{sg2} T. Saanouni, R. Ghanmi;
\emph{Local well-posedness of a critical inhomogeneous bi-harmonic
Schr\"odinger equation}, 
Advances in Opertor Theory., 9 (2024), 2662-2009.

\bibitem{sp2} T. Saanouni, C. Peng;
\emph{Local well-posedness of a critical inhomogeneous bi-harmonic 
Schr\"odinger equation}, Mediterr. J. Math., 20 (2023), 170.

\bibitem{sx} T. Saanouni, C. Xu;
 \emph{Scattering Theory for a Class of Radial Focusing Inhomogeneous 
 Hartree Equations}. Potential Anal., 58, 2023, 617-643.

\bibitem{tsuz} T. Suzuki;
 \emph{Energy methods for Hartree type equation with inverse-square 
 potentials}, Evol. Equ. Control Theory, 2 (3), 2013, 531-542.
 
\bibitem{tsuz1} T. Suzuki;
\emph{Solvability of nonlinear Schr\"odinger equations with some critical 
singular potentialvia generalized Hardy-Rellich inequalities}, 
Funkcial. Ekvac., 59 (1), 2016, 1-34.

\bibitem{ect} E. C. Titchmarsh;
\emph{Eigen function Expansions Associated with Second-Order Differential 
Equations}, University Press, Oxford, 1946.

\bibitem{cx} C. Xu;
 \emph{Scattering for the non-radial focusing inhomogeneous nonlinear 
Schr\"odinger-Choquard equation}, 	arXiv:2104.09756, 2021.

\end{thebibliography}
\end{document}